\DeclareMathAlphabet{\mathcalligra}{T1}{calligra}{m}{n}
\DeclareFontShape{T1}{calligra}{m}{n}{<->s*[1.2]callig15}{}
\theoremstyle{change}
\newtheorem{Thm}{Theorem}[section]
\newtheorem{Cor}[Thm]{Corollary}
\newtheorem{Lem}[Thm]{Lemma}
{\theorembodyfont{\rmfamily}
\newtheorem{Num}[Thm]{}

}
\newcommand{\pr}{\mathrm{pr}}
\renewcommand{\phi}{\varphi}
\renewcommand{\rho}{\varrho}
\newcommand{\bra}[1]{\langle#1\rangle}
\newcommand{\proof}{\par\medskip\rm\emph{Proof. }}
\newcommand{\qed}{\ \hglue 0pt plus 1filll $\Box$}
\newcommand{\mapstoo}{\longmapsto}
\newcommand{\RR}{\mathbb{R}}
\renewcommand{\SS}{\mathbb{S}}
\newcommand{\QQ}{\mathbb{Q}}
\newcommand{\CC}{\mathbb{C}}
\newcommand{\id}{\mathrm{id}}
\newcommand{\SKIP}[1]{}
\renewcommand{\emptyset}{\varnothing}
\newcommand{\Aut}{\mathrm{Aut}}
\renewcommand{\setminus}{-}
\newcommand{\incl}{\mathrm {incl}}
\newcommand{\defi}{\buildrel\rm def\over=}
\def\.{{\cdot}}
\begin{document}

%

\def\L{\mathfrak L}
\def\I{\Bbb I}
\def\R{\Bbb R}
\def\Z{\Bbb Z}
\def\incl{{\mathrm incl}}
\def\defi{\buildrel\rm def\over=}
\def\.{{\cdot}}

\title{{\bf Transitive actions of locally compact groups\\ on locally contractible
spaces}}

\author{Karl H.~Hofmann and Linus Kramer}
\date{}
\maketitle

\begin{abstract}\noindent
Suppose that $X=G/K$ is the quotient of a locally compact
group by a closed subgroup. If $X$ is locally contractible
and connected, we prove that $X$ is a manifold. If the $G$-action 
is faithful, then $G$ is a Lie group.
\end{abstract}

\section{Introduction}

In 1974 J. Szenthe stated the following result in \cite{janos}.
{\em Let a $\sigma$-compact locally compact group $G$, with
compact quotient $G/G^\circ$, act as a transitive and faithful
transformation group on a locally contractible space $X$.
Then $X$ is a manifold and $G$ is a Lie group.} 
This result, which may be viewed as a
solution of Hilbert's 5th problem for transformation groups,
has been widely used since then. However, it was discovered
in 2011 that Szenthe's proof contains a serious gap. In
the present paper we close this gap, proving Szenthe's
statement in a different way. 
Independently and simultaneously, this result was 
also proved by A.A.~George Michael \cite{adeliii} and
by S.~Antonyan and T.~Dobrowolski \cite{AD}.
The last section of our
paper contains some more comments on the history of this
problem.
Our main results are as follows.

\medskip
\noindent
\textbf{Theorem A}
{\em Let $G$ be a compact group and let $K\subseteq G$ be a closed subgroup.
Suppose that the homogeneous space $X=G/K$ contains a nonempty open 
subset $V\subseteq X$ which is contractible in $X$. Then $X$ is a closed manifold.
If $N=\bigcap\{gKg^{-1}\mid g\in G\}$ denotes the kernel of the $G$-action,
then $G/N$ is a compact Lie group acting transitively on $X$.}

\medskip
\noindent
For the case of a locally compact group, we need a stronger
topological assumption on the coset space $X$.

\medskip
\noindent
\textbf{Theorem B}
{\em Let $G$ be a locally compact group and let $K\subseteq G$ be a closed subgroup.
Suppose that the homogeneous space $X=G/K$ is locally contractible.
Then $X$ is a manifold. If $X$ is connected or if $G/G^\circ$ is compact,
and if  $N=\bigcap\{gKg^{-1}\mid g\in G\}$ denotes the kernel of the action,
then $G/N$ is a Lie group acting transitively on $X$.}

\medskip
\noindent
In the course of the proof, we need the following extension of Iwasawa's
Local Splitting Theorem, which may be interesting in its own right.
A local version of this result was proved by Glu\v skov~\cite{Gluskov}.
This result is also proved in \cite{hofmori} Theorem~4.1 in a different way.

\medskip
\noindent
\textbf{Theorem C}
{\em Let $G$ be a locally compact group and let $U\subseteq G$ be a 
neighborhood of the identity. Then there exist a compact
subgroup $N\subseteq U$, a simply connected Lie group $L$ 
and an open homomorphism $\phi:N\times L\rTo G$ with discrete kernel, such that
$\phi(n,1)=n$ for all $n\in N$.}

\medskip\noindent
A variation of the theme of this article appears in the third edition
of \cite{HMCompact} (2013) in Sections 10.72 to 10.93.

\medskip
\noindent\textbf{Conventions and Terminology}
All maps and group homomorphisms are assumed to be continuous and
all spaces and groups are assumed to be Hausdorff unless stated
otherwise. Topological countability assumptions will be stated
explicitly whenever they are used. By a Lie group we mean a
locally compact group $G$ which is a smooth manifold, such that 
multiplication and inversion are smooth maps, without any further
topological countability assumptions.

The identity component of a topological group $G$
is denoted by $G^\circ$. This is always a closed normal subgroup.
We denote by $\mathcal N(G)$ the collection of all closed normal subgroups $N\unlhd G$ with
the property that $G/N$ is a Lie group, and we note that $G\in\mathcal N(G)$.

For subgroups $P,Q\subseteq G$ we put
\begin{align*}
 \mathrm{Cen}_P(Q)=&\{p\in P\mid pq=qp\text{ for all }q\in Q\} \\
 \mathrm{Cen}(Q)=&\mathrm{Cen}_Q(Q)                            \\
 \mathrm{Nor}_P(Q)=&\{p\in P\mid pQp^{-1}=Q\}                  \\
 [P,Q]=&\bra{[p,q]\mid p\in P\text{ and }q\in Q}
\end{align*}
The unit interval is denoted by $[0,1]=\{t\in\RR\mid 0\leq t\leq 1\}$.
For a map $h:X\times [0,1]\rTo Y$ we write $h_t(x)=h(x,t)$.
The projection map of a cartesian product $X\times Y$ is
denoted by $\pr_X:(x,y)\mapstoo x$, and similarly
$\pr_Y:(x,y)\mapstoo y$.

\medskip\noindent\textbf{Acknowledgment}
%
We thank S.~Antonyan, S.~Morris, and the referee for their helpful comments.
Also, we are grateful to R.~McCallum for his careful reading of the manuscript.

\section{Some homotopy theory of compact groups}
We begin by collecting some results which we shall need in the proof
of Theorem~A. 
Unless stated otherwise, homotopies are not required to preserve
base points.
Recall that a map $E\rTo B$ is called a \emph{fibration} if it
has the \emph{homotopy lifting property} for every space $X$.
This means that for every commutative diagram
\begin{diagram}[width=4em]
X\times\{0\} & \rTo & E\\
\dInto& \ruDotsto^{\tilde h}&\dTo\\
X\times[0,1]&\rTo^h &B,
\end{diagram}
the dotted lift $\tilde h$ exists. The following result is related
to the notion of \emph{irreducibility} in \cite{Dugundji} p.~394
and in \cite{Madison}.
\begin{Lem}
\label{HomotopyLemma}
Let $E$ be a space with the property that every homotopy equivalence
$E\rTo^\simeq E$ is surjective. Suppose that $p:E\rTo B$ is a
surjective fibration. Then also every homotopy equivalence $B\rTo^\simeq B$
is surjective. If $p$ is homotopic to a map $p':E\rTo B$, then
$p'$ is also surjective.

\proof
First we note the following. If $\xi:B\rTo B$ is a homotopy equivalence
with homotopy inverse $\eta$, then $\xi\circ\eta$ is, by definition, homotopic
to $\id_B$. In order to show the surjectivity of such a map $\xi$, 
it suffices therefore to prove the surjectivity of every map which is
homotopic to the identity $\id_B$.

Suppose that $h:B\times[0,1]\rTo B$ is a map with
$h_0=\id_B$. Then the map $h':E\times[0,1]\rTo B$
with $h'_t(x)=h_t(p(x))$ is a homotopy between
$p=h'_0$ and $p'=h'_1$. We now show that $p'$ is surjective.
Since $E$ is a fibration, there exists a
lift $\tilde h':E\times[0,1]\rTo E$ of $h'$,
with $\tilde h'_0=\id_E$.
\begin{diagram}[width=4em]
&& E\\
&\ruDotsto^{\tilde h'}& \dTo\\
E\times{[0,1]} & \rTo^{h'} & B  
\end{diagram}
By our assumptions on $E$, the map $\tilde h_1':E\rTo E$ is
surjective. It follows that $B=p(\tilde h_1'(E))=p'(E)=h_1(B)$.
\qed
\end{Lem}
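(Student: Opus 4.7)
My plan is to prove both assertions by a single homotopy lifting argument, based on the unified claim that \emph{every map $f\colon E\rTo B$ homotopic to $p$ is surjective}. The second assertion of the lemma is exactly this claim, and the first reduces to it cleanly. If $\xi\colon B\rTo B$ is a homotopy equivalence with homotopy inverse $\eta$, then $\xi\circ\eta\simeq\id_B$, so $\xi\circ\eta\circ p\simeq \id_B\circ p = p$. Granting the unified claim, $\xi\circ\eta\circ p$ is surjective; since its image is contained in the image of $\xi$, the map $\xi$ itself is surjective.

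To prove the unified claim, let $h\colon E\times[0,1]\rTo B$ be a homotopy with $h_0=p$ and $h_1=f$. Because $p$ is a fibration and $p\circ\id_E=p=h_0$, the homotopy lifting property applied to the initial map $\id_E\colon E\rTo E$ produces a lift $\tilde h\colon E\times[0,1]\rTo E$ with $\tilde h_0=\id_E$ and $p\circ\tilde h=h$. The map $\tilde h_1\colon E\rTo E$ is then homotopic to $\id_E$ via $\tilde h$, hence is a homotopy equivalence of $E$, and the hypothesis on $E$ forces $\tilde h_1$ to be surjective. Consequently,
\[
f(E)=h_1(E)=p(\tilde h_1(E))=p(E)=B,
\]
where the last equality uses surjectivity of $p$.

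The only subtlety I foresee is recognizing the right packaging of the two assertions: the statement about self-equivalences of $B$ does not a priori involve $E$, so one must compose with $p$ to transport the problem into $E$, where the hypothesis on self-equivalences lives. After that observation, the proof is a textbook application of the homotopy lifting property together with the standard fact that a map homotopic to the identity is a homotopy equivalence.
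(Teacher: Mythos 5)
Your proof is correct and follows essentially the same route as the paper: both arguments transport the homotopy to $E$ by composing with $p$, lift it through the fibration with initial condition $\id_E$, invoke the hypothesis that the homotopy self-equivalence $\tilde h_1$ of $E$ is surjective, and push forward through the surjective map $p$. The only difference is cosmetic packaging — you phrase the key claim as "every map $E\rTo B$ homotopic to $p$ is surjective," while the paper reduces to "every map $B\rTo B$ homotopic to $\id_B$ is surjective" — and the two formulations yield each other immediately.
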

We now recall several results about the structure of compact groups.
\begin{Thm}[Approximation by compact Lie groups]
\label{Approximation}
Let $G$ be a compact group. Then every neighborhood $V$ of the identity
contains a closed normal subgroup $N\unlhd G$ such that
$G/N$ is a compact Lie group. The set $\mathcal N(G)$ consisting 
of all closed normal subgroups $N\unlhd G$ such that $G/N$ is a Lie
group is a filter basis converging to the identity.

\proof
See Theorem 9.1 and 2.43 in \cite{HMCompact}.
\qed
\end{Thm}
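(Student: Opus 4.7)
The plan is to deduce this from the Peter--Weyl theorem together with a standard compactness argument. First I would recall that for a compact group $G$, the continuous finite-dimensional unitary representations $\pi:G\rTo U(n)$ separate points: for every $g\neq 1$ there is such a $\pi$ with $\pi(g)\neq 1$. For any such $\pi$, the kernel $\kernel(\pi)$ is a closed normal subgroup of $G$, and the quotient $G/\kernel(\pi)$ embeds as a closed subgroup of the Lie group $U(n)$, and is therefore itself a compact Lie group. In particular every such $\kernel(\pi)$ already lies in $\mathcal N(G)$.

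Next I would prove the main approximation claim. Let $V$ be an open neighborhood of $1\in G$. For every $g\in G\setminus V$, pick a representation $\pi_g$ with $\pi_g(g)\neq 1$; then the open set $U_g=\{h\in G\mid \pi_g(h)\neq \pi_g(g)\}$ is an open neighborhood of $1$ in $G$, and $g\notin U_g$. A small rearrangement shows that the complementary open sets $\{h\mid \pi_g(h)\pi_g(g)^{-1}\neq 1\}$ cover $G\setminus V$. Since $G\setminus V$ is compact, finitely many representations $\pi_1,\dots,\pi_k$ suffice so that their common kernel satisfies
\[
N=\kernel(\pi_1)\cap\cdots\cap\kernel(\pi_k)\subseteq V.
\]
The diagonal map $G/N\rTo U(n_1)\times\cdots\times U(n_k)$ is a continuous injection, and since $G/N$ is compact it is a topological embedding onto a closed subgroup of a Lie group, hence $G/N$ is a compact Lie group. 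Thus $N\in\mathcal N(G)$ with $N\subseteq V$, which is the first assertion and simultaneously shows that $\mathcal N(G)$ converges to the identity.

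It remains to verify the filter basis property. Clearly $G\in\mathcal N(G)$, so $\mathcal N(G)$ is nonempty. Given $N_1,N_2\in\mathcal N(G)$, the homomorphism
\[
G/(N_1\cap N_2)\rTo G/N_1\times G/N_2,\qquad g(N_1\cap N_2)\mapstoo (gN_1,gN_2),
\]
is a continuous injection of a compact group into a compact Lie group, and its image is a closed subgroup. Hence $G/(N_1\cap N_2)$ is a closed subgroup of a Lie group, and is therefore itself a Lie group; i.e.\ $N_1\cap N_2\in\mathcal N(G)$. This gives the filter basis axiom, and combined with the previous paragraph shows that this filter basis converges to $1$.

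The only genuinely nontrivial input is Peter--Weyl (to produce enough separating finite-dimensional representations); the rest is a compactness packaging and the standard fact that closed subgroups of Lie groups are Lie groups. The step I would be most careful about is arguing that the \emph{common} kernel of finitely many representations, and not merely each individual kernel, can be shoehorned inside a prescribed neighborhood~$V$; this is where compactness of $G\setminus V$ (rather than merely of $G$) is essential.
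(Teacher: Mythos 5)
Your proof is essentially correct, and it is worth noting that the paper itself offers no argument here: it simply cites Theorems 9.1 and 2.43 of Hofmann--Morris, and the Peter--Weyl argument you give is precisely the standard proof lying behind those citations. The structure is sound: separating finite-dimensional unitary representations give quotients that embed as closed subgroups of some $U(n)$ (hence are compact Lie groups by Cartan's closed subgroup theorem); compactness of $G\setminus V$ lets you pass to finitely many representations whose common kernel lies in $V$; a continuous injection of a compact group into a Lie group is a closed embedding, which handles both the finite intersection $N=\bigcap_i\kernel(\pi_i)$ and the filter-basis step for $N_1\cap N_2$. The one place where your write-up slips is the covering step: the set $U_g=\{h\mid \pi_g(h)\neq\pi_g(g)\}$ is the \emph{same} set as $\{h\mid \pi_g(h)\pi_g(g)^{-1}\neq 1\}$, and neither contains $g$, so these sets cannot cover $G\setminus V$. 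What you want is $W_g=\{h\mid \pi_g(h)\neq 1\}=G\setminus\kernel(\pi_g)$, which is open, contains $g$, and so the family $\{W_g\}_{g\in G\setminus V}$ covers the compact set $G\setminus V$; a finite subcover $W_{g_1},\dots,W_{g_k}$ then gives $\bigcap_i\kernel(\pi_{g_i})\subseteq V$, since any $h$ in the common kernel lies in no $W_{g_i}$. This is a one-line repair of a misstated set rather than a gap in the idea --- you correctly identified the compactness of $G\setminus V$ as the crux --- but as written the covering claim is false and should be fixed.
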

\begin{Lem}
\label{NSSLemma}
Let $G$ be a compact group and $N\unlhd G$ a closed normal subgroup.
If $N$ and $G/N$ are Lie groups, then $G$ is a Lie group as well.

\proof
We show that $G$ has no small subgroups, see 
\cite{HMCompact} Theorem~2.40. 
Let $W\subseteq G/N$ be
a neighborhood of the identity which contains no nontrivial subgroup
and let $V$ be its preimage in $G$ under the projection
$G\rTo G/N$.
Let $U\subseteq G$ be a neighborhood of the identity such that
$U\cap N$ does not contain a nontrivial subgroup of $N$. Then
$U\cap V$ contains no nontrivial subgroup of $G$.
\qed
\end{Lem}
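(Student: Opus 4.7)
The plan is to invoke the characterization that a compact group is a Lie group if and only if it has no small subgroups (NSS), i.e., some identity neighborhood contains no nontrivial subgroup. This is cited in the excerpt as \cite{HMCompact} Theorem~2.40, so I would reduce to showing that $G$ is NSS.

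The strategy is to build an identity neighborhood in $G$ that forces any subgroup contained in it to be trivial, by combining the NSS property of $G/N$ with the NSS property of $N$. First I would use that $G/N$ is a Lie group, hence NSS, to pick a neighborhood $W$ of the identity in $G/N$ that contains no nontrivial subgroup, and let $V\subseteq G$ be the preimage of $W$ under the canonical projection $\pi:G\rTo G/N$. Any subgroup $H\subseteq V$ has image $\pi(H)\subseteq W$, which is a subgroup of $G/N$, hence trivial; so $H\subseteq N$. This reduces the problem of small subgroups of $G$ inside $V$ to that of small subgroups of $N$.

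Next, since $N$ is a Lie group it is itself NSS, so there is a neighborhood of the identity in $N$, which I can write as $U\cap N$ for some open neighborhood $U\subseteq G$ of the identity, containing no nontrivial subgroup of $N$. Then $U\cap V$ is an identity neighborhood in $G$, and any subgroup $H\subseteq U\cap V$ satisfies $H\subseteq N$ by the previous paragraph and $H\subseteq U\cap N$ by construction, hence $H=\{1\}$.

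The only subtle point is making sure the neighborhood $U$ in $G$ genuinely witnesses NSS for $N$; this is just the fact that the subspace topology on $N$ agrees with its intrinsic topology, so any identity neighborhood in $N$ is of the form $U\cap N$ for some open $U\subseteq G$. With that dispensed with, the lemma follows immediately from the NSS characterization applied to $G$, with witnessing neighborhood $U\cap V$. I do not anticipate any serious obstacle; the content is really just that NSS is preserved under extensions of compact groups.
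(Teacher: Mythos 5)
Your proposal is correct and follows exactly the same route as the paper's proof: reduce to the no-small-subgroups criterion, take the preimage $V$ of an NSS-witnessing neighborhood $W\subseteq G/N$, intersect with a neighborhood $U$ such that $U\cap N$ witnesses NSS for $N$, and observe that any subgroup of $U\cap V$ projects trivially into $W$ and hence lies in $U\cap N$. You merely spell out the two-line verification that the paper leaves implicit.
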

\begin{Thm}[Complementation of normal subgroups]
\label{Complementation}
Suppose that $G$ is a compact group and that $N\unlhd G$ is a closed normal
subgroup. If $G/N$ is connected, then there exists a closed connected subgroup
$M\unlhd G^\circ$ with $G=MN$ and $[M,N]=\{1\}$, and such that $M\cap N$ is totally
disconnected,

\proof
See Theorem 9.77 of \cite{HMCompact}. 
\qed
\end{Thm}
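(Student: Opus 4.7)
The plan is to reduce to the case of a compact connected Lie group, handle that case by an orthogonal splitting of the Lie algebra, and then globalize by approximation (Theorem~\ref{Approximation}).

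First I would reduce to the case where $G$ is connected. Since $G/N$ is connected and the image of $G^\circ$ in $G/N$ is a connected subgroup containing the identity, that image must be all of $G/N$, whence $G=G^\circ N$. It therefore suffices to construct a closed connected $M\unlhd G^\circ$ with $M\.(G^\circ\cap N)=G^\circ$, $[M,G^\circ\cap N]=\{1\}$, and $M\cap(G^\circ\cap N)$ totally disconnected; afterward one checks that the resulting $M$ still satisfies $[M,N]=\{1\}$, using that $N=(G^\circ\cap N)\.F$ for a finite collection of coset representatives when $G/G^\circ$ acts suitably. Thus we may assume $G$ is connected.

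Next, assume $G$ is a compact connected Lie group. Choose an $\Ad(G)$-invariant inner product on $\fg$. Then $\fn:=\Lie(N^\circ)$ is an ideal, and its orthogonal complement $\fm:=\fn^\perp$ is $\Ad(G)$-invariant, hence also an ideal, with $[\fm,\fn]\subseteq\fm\cap\fn=0$. Let $M_0\unlhd G$ denote the analytic subgroup with Lie algebra $\fm$; because $\fg=\fm\oplus\fn$ as a sum of ideals, $M_0$ is closed, connected, and normal, $M_0\.N^\circ=G$, and $M_0\cap N^\circ$ is finite. The finite group $N/N^\circ$ acts on $M_0$ by conjugation; replacing $M_0$ by the identity component of its $N/N^\circ$-fixed subgroup yields a closed connected $M\unlhd G$ with $[M,N]=\{1\}$, and one verifies $MN=G$ and $M\cap N$ finite by comparing Lie algebras and counting components.

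For general compact connected $G$, I would invoke Theorem~\ref{Approximation} to write $G=\varprojlim G/K_\alpha$ with each $G/K_\alpha$ a compact Lie group, and apply the Lie case to $NK_\alpha/K_\alpha\unlhd G/K_\alpha$ to obtain $M_\alpha\unlhd G$ containing $K_\alpha$ and such that $M_\alpha/K_\alpha$ is the constructed complement. A clean way to bypass the coherence question is to describe $M$ intrinsically as $\Cen_G(N)^\circ$ and verify the required properties in every finite-dimensional quotient; this propagates to the projective limit because $\mathcal N(G)$ is a filter basis converging to~$1$, so $M$ is closed, connected, normal in $G^\circ$, and meets $N$ in a totally disconnected subgroup (as it meets every open normal Lie approximation in a finite one).

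The main obstacle I anticipate is the Lie-case step when $N$ is disconnected: the naive complement $M_0$ constructed from the Lie algebra only commutes with $N^\circ$, and one must show that after cutting down to the $N/N^\circ$-fixed subgroup the product $MN$ is still all of $G$. This is a genuine but classical issue, handled by an averaging argument using that a finite group of automorphisms of a compact connected Lie group has a fixed subgroup whose identity component is large enough to cover the required quotient $G/N$.
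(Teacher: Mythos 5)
The paper offers no proof of this statement at all; it is quoted from Hofmann--Morris, \emph{The Structure of Compact Groups}, Theorem~9.77. Judged on its own, your proposal has a genuine gap precisely where the theorem has its real content, namely in the passage from compact Lie groups to general compact groups. Your route (a) --- choose a complement in each Lie quotient $G/K_\alpha$ --- leaves the coherence problem open: the complements produced by the Lie-algebra argument in the various quotients need not be compatible with the bonding maps, and without compatibility nothing survives to the projective limit. Your route (b), the intrinsic description $M=\Cen_G(N)^\circ$, is simply false: take $G=\TT^2$ and $N=\TT\times\{1\}$; then $G/N$ is connected, $\Cen_G(N)^\circ=G$, and $M\cap N=N$ is a circle, not totally disconnected. (A correct complement here is a complementary subtorus such as $\{1\}\times\TT$, and no such choice is canonical --- which is exactly why the coherence question cannot be waved away.) The actual proof has to work structurally: split $G^\circ$ into its semisimple part, where closed normal subgroups are ``partial products'' of the simple factors and complements are the complementary partial products, and its connected central abelian part, where one dualizes and uses that the relevant subgroup of the torsion-free character group is a direct summand up to the required totally disconnected error.

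There are also two slips in the Lie case itself, one fixable and one that works in your favor. First, the orthogonal complement $\fm=\fn^\perp$ for an $\Ad(G)$-invariant inner product need not integrate to a \emph{closed} subgroup: already for $G=\TT^2$, $N=\TT\times\{1\}$, a generic (automatically invariant) inner product makes $\fn^\perp$ an irrational line whose analytic subgroup is a dense wind. On the central torus one must choose a rational complement, using that the annihilator of a subtorus is a direct summand of the character lattice. Second, the complication you single out as the main obstacle is vacuous for connected $G$: since $G=M_0N^\circ$, every $n\in N$ factors as $n=m'n_0$ with $m'\in M_0\cap N$ and $n_0\in N^\circ$, and $M_0\cap N$ is a discrete normal subgroup of the connected group $G$, hence central; therefore $[M_0,N]=[M_0,N^\circ]=\{1\}$ automatically and no fixed-point subgroup is needed. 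By contrast, your reduction of disconnected $G$ to connected $G$ does need more care than you give it: $N/(N\cap G^\circ)\cong G/G^\circ$ is profinite but in general infinite, so the ``finite collection of coset representatives'' is not available, and $[M,N\cap G^\circ]=\{1\}$ does not by itself yield $[M,N]=\{1\}$.
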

The following fact about abstract permutation groups is well-known, 
see \cite{DM} Theorem~4.2A. In order to make this article
self-contained we include a proof. We remark that Lemma~\ref{CentralizerLemma}
below indicates that in a topological setting we have here an interesting
example of forced continuity of abstract group actions.
\begin{Num}\label{PermutationGroupFact}
Suppose that $X$ is a set and
that $x\in X$. Let $\mathrm{Sym}(X)$ denote the group of all 
permutations of $X$. Suppose that $M\subseteq\mathrm{Sym}(X)$ 
is a transitive subgroup, with $x$-stabilizer $H=M_x$. 
Then we may identify $X$ with the quotient
$M/H$ in an $M$-equivariant way, thereby identifying $x$ with $H\in M/H$. 
Let $N=\mathrm{Nor}_M(H)\subseteq M$.
Then $N$ acts (from the left) on $M/H$ via
$(n,mH)\mapstoo mHn^{-1}=mn^{-1}H$. In this way we obtain
a homomorphism $\alpha$
\[
 1\rTo H\rInto N\rTo^\alpha \mathrm{Sym}(X)
\]
with $\alpha(n)(mH)=mn^{-1}H$. The kernel of $\alpha$ is $H$. 
Obviously, the factor group $N/H$ centralizes in this action the group $M$. 

We claim that $\alpha$ maps $N$ onto the centralizer 
$C=\mathrm{Cen}_{\mathrm{Sym}(X)}(M)$.
Suppose that $c\in C$. By transitivity of $M$, there exists $n\in M$ with $n^{-1}(x)=c(x)$.
For $h\in H$ we have then
$hn^{-1}(x)=hc(x)=ch(x)=c(x)=n^{-1}(x)$, whence $n\in N$.
For $m\in M$ we have
$cm(x)=mc(x)=mn^{-1}(x)$. The right-hand side is precisely the
$N$-action defined above.
\end{Num}
We need, however, a topological version of this fact
which is somewhat stronger than \cite{Onishchik} p.~73.
\begin{Lem}
\label{CentralizerLemma}
Suppose that $G$ is a compact group, that $K\subseteq G$ is a closed subgroup
such that $K$ contains no 
nontrivial normal subgroup of $G$. Suppose that $M\subseteq G$ is a closed 
subgroup such that $G=KM$. Then the compact group $\mathrm{Cen}_G(M)$
injects continuously into the compact group
$\mathrm{Nor}_M(M\cap K)/M\cap K$.

\proof
Let $N=\mathrm{Nor}_M(M\cap K)$ and $H=M\cap K$.
By \ref{PermutationGroupFact}
we may view $\mathrm{Cen}_G(M)$ as a subgroup of the abstract group
$C=\mathrm{Cen}_{\mathrm{Sym}(G/K)}(M)$.
By \ref{PermutationGroupFact} we have an injective abstract group
homomorphism $\beta:\mathrm{Cen}_G(M)\rTo N/H$.
The graph
\[
\beta=\{(c,nH)\in \mathrm{Cen}_G(M)\times N/H\mid nc\in K\}
\]
of $\beta$ is closed and thus $\beta$ is continuous, 
see for example \cite{Dugundji} XI.2.7.
\qed
\end{Lem}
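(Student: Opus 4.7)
My plan is to combine the abstract group-theoretic content of Fact~\ref{PermutationGroupFact} with a closed-graph argument in order to upgrade the abstract injection $\beta:\Cen_G(M)\to N/H$ to a continuous homomorphism.

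First I would check that the hypotheses of Fact~\ref{PermutationGroupFact} are met for the natural $G$-action on $X=G/K$. Since $G=KM$ implies $G=MK$ by taking inverses, the subgroup $M$ acts transitively on $G/K$, and its stabilizer at the coset $K\in G/K$ is precisely $H=M\cap K$. The assumption that $K$ contains no nontrivial normal subgroup of $G$ is exactly the statement that the induced abstract homomorphism $G\to\mathrm{Sym}(G/K)$ is injective, so $\Cen_G(M)$ embeds as a subgroup of $C=\mathrm{Cen}_{\mathrm{Sym}(G/K)}(M)$. Fact~\ref{PermutationGroupFact} identifies $C$ with $N/H$, and the explicit recipe there shows that for $c\in\Cen_G(M)$, the element $\beta(c)=nH$ is characterized by finding $n\in M$ with $n^{-1}K=cK$, i.e.\ $nc\in K$. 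This yields an injective abstract homomorphism $\beta:\Cen_G(M)\to N/H$ with the graph described in the lemma.

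Next I would verify that this graph is closed in the compact Hausdorff space $\Cen_G(M)\times N/H$. Two small observations are needed. First, the condition ``$nc\in K$'' is well defined on cosets $nH$: if $h\in H\subseteq K$, then since $c$ centralizes $M\supseteq H$ we have $nhc=nch=(nc)h$, so $(nh)c\in K$ iff $nc\in K$. Second, let $\Phi:\Cen_G(M)\times N\to G$ be the continuous map $(c,n)\mapsto nc$; then $R=\Phi^{-1}(K)$ is closed and saturated under the (right) $H$-action on the second factor. Since the quotient map $\Cen_G(M)\times N\to \Cen_G(M)\times N/H$ is an open map whose image of $R$ has preimage $R$, the graph $\{(c,nH):nc\in K\}$ is closed.

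Finally, since $N/H$ is compact Hausdorff, a map into it is continuous as soon as its graph is closed (this is the standard closed-graph criterion recorded in \cite{Dugundji}~XI.2.7), which finishes the proof. The only subtle step is the well-definedness and closedness of the graph; everything else is bookkeeping on top of Fact~\ref{PermutationGroupFact}, so I anticipate no further obstacles.
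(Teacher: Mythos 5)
Your proposal is correct and follows essentially the same route as the paper: realize $\mathrm{Cen}_G(M)$ inside $\mathrm{Cen}_{\mathrm{Sym}(G/K)}(M)\cong N/H$ via Fact~\ref{PermutationGroupFact} and then apply the closed-graph criterion for maps into a compact Hausdorff space. The extra details you supply (transitivity of $M$ on $G/K$, well-definedness of the condition $nc\in K$ on cosets $nH$, and the saturation argument showing the graph is closed) are exactly the verifications the paper leaves implicit, and they all check out.
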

\begin{Thm}
\label{AlmostLieGroup}
Let $M$ be a compact connected group. If there exists a closed
totally disconnected
normal subgroup $D\subseteq M$ such that $M/D$ is a Lie group, then
there exist simple simply connected compact Lie groups
$S_1,\ldots, S_r$ and a compact connected finite dimensional abelian
group $A$ and a central surjective homomorphism 
\[
A\times S_1\times \cdots\times S_r\rTo M
\]
with a totally disconnected kernel.

\proof By \cite{HMCompact} Proposition~9.47, the Lie algebra of $M$ is
isomorphic to the Lie algebra of $M/D$ and hence
has a finite dimension. By \cite{HMCompact} Theorem~9.52,
$M$ has the properties claimed above. 
\qed
\end{Thm}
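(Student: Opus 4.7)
The plan is to extract the Lie-algebraic structure of $M$ and then realize it by a central surjection from a product of Lie-type factors. First, observe that a totally disconnected closed normal subgroup $D$ of a compact connected group has trivial Lie algebra, because $\exp$ sends any one-parameter subgroup into $D^\circ=\{1\}$. Hence the projection $M\to M/D$ induces an isomorphism of topological Lie algebras $\Lie(M)\cong\Lie(M/D)$, and since $M/D$ is a Lie group, $\fm\defi\Lie(M)$ is finite-dimensional.

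Because $M$ is compact, $\fm$ is a compact Lie algebra and decomposes canonically as $\fm=\fz(\fm)\oplus[\fm,\fm]$, with the derived ideal a direct sum $\fs_1\oplus\cdots\oplus\fs_r$ of compact simple Lie algebras. Let $S_i$ be the simply connected Lie group integrating $\fs_i$, which is compact by Weyl's theorem, and let $A\defi Z(M)^\circ$ be the identity component of the center of $M$---a compact connected abelian group of dimension $\dim\fz(\fm)<\infty$.

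Next I would construct $\phi\colon A\times S_1\times\cdots\times S_r\to M$ as the product of the inclusion $A\hookrightarrow M$ with unique homomorphisms $S_i\to M$ integrating $\fs_i\hookrightarrow\fm$; these exist because each $S_i$ is simply connected and $M$ admits the necessary local Lie structure along its finite-dimensional Lie algebra. The factor images pairwise commute (their Lie subalgebras do, and the relevant subgroups are connected), so $\phi$ is a continuous group homomorphism. Its differential is the identity on $\fm$, so the image is open in $M$ and, being compact, also closed; connectedness of $M$ yields surjectivity. The kernel of $\phi$ has trivial Lie algebra and is therefore totally disconnected, and as a totally disconnected closed normal subgroup of a compact connected group it must be central.

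The main obstacle is the integration step: while the Lie-algebra decomposition is elementary compact Lie theory, producing actual group homomorphisms $S_i\to M$ into a group that is only a projective limit of Lie groups requires a nontrivial lifting argument through the inverse system $\{M/N\mid N\in\mathcal N(M)\}$. This is precisely what is packaged by the cited structural results \cite{HMCompact} Proposition~9.47 (transferring the Lie algebra across the totally disconnected quotient) and Theorem~9.52 (assembling the central surjection with totally disconnected kernel).
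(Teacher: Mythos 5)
Your proposal is correct and takes essentially the same route as the paper, whose entire proof consists of the two citations you yourself name at the end: \cite{HMCompact} Proposition~9.47 gives $\Lie(M)\cong\Lie(M/D)$ (finite-dimensional, since $\Lie(D)=0$), and Theorem~9.52 supplies the central surjection from $A\times S_1\times\cdots\times S_r$ --- your sketch just unpacks what those results assert. The one step I would rephrase is the surjectivity argument: in a compact group not yet known to be a Lie group, a surjective differential does not directly make the image open; rather, the image of $\phi$ is compact, hence closed, and contains $\exp_M(\fm)$, which generates a dense subgroup of the connected group $M$ --- but this, like the integration of the simple ideals to homomorphisms $S_i\to M$, is exactly what the cited structure theorem packages.
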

The following Theorems~\ref{ThmMM1} and \ref{ThmMM2} are due to Madison and
Mostert~\cite{Madison}. 
In order to make the paper self-contained, we include proofs.
\begin{Thm}[Madison-Mostert]
\label{ThmMM1}
Let $G$ be a compact group and let $P,Q\subseteq G$ be closed subgroups, with
$P\subseteq Q$. Then the natural map
\[
 G/P\rTo G/Q
\]
is a fibration.

\proof
Suppose we are given a commutative diagram
\begin{diagram}[width=4em]
X\times\{0\} & \rTo & G/P\\
\dInto& \ruDotsto &\dTo\\
X\times[0,1]&\rTo &G/Q.
\end{diagram}
We have to show the existence of the dotted map. To this end we consider the poset
$\mathcal P$ consisting of pairs $(\phi,N)$, where $N\unlhd G$ is a closed normal subgroup
and $\phi:X\times[0,1]\rTo G/PN$ is a map that fits into the commutative diagram
\begin{diagram}[width=4em]
X\times\{0\} & \rTo & G/P & \rTo & G/PN\\
\dInto&&\dTo& \ruDotsto(4,2)&\dTo \\
X\times[0,1]&\rTo &G/Q& \rTo& G/QN.
\end{diagram}
We put $(\phi,N)\geq(\psi,M)$ if $N\subseteq M$ and if the diagram
\begin{diagram}[width=4em]
&& G/P & \rTo & G/PN & \rTo & G/PM\\
&&\dTo& \ruDotsto(4,2) &\dTo& \ruDotsto(6,2) &\dTo\\
X\times[0,1]&\rTo &G/Q& \rTo& G/QN&\rTo&G/QM.
\end{diagram}
commutes. We claim that this partial order is inductive.
Suppose that $\mathcal T\subseteq\mathcal P$ is a linearly ordered subset.
Let $L=\bigcap\{N\mid (N,\phi)\in\mathcal T\}\unlhd G$. We have natural maps
\begin{diagram}[height=3em,width=5em,nohug]
&&\prod_{(N,\phi)\in\mathcal T}G/PN&\lTo^\alpha &G/PL\\
&\ruTo^\gamma &\dTo&\ruDotsto(4,2)&\dTo\\
 X \times[0,1] & \rTo^\delta &\prod_{(N,\phi)\in\mathcal T}G/QN&\lTo^\beta&G/QL
\end{diagram}
Now $\alpha$ and $\beta$ are injective and hence homeomorphisms onto their respective images. 
Moreover, the image of $\gamma$ is contained in the image of $\alpha$ and the image of
$\delta$ is contained in the image of $\beta$. Thus we can fill in the dotted map and
obtain an upper bound $(L,\psi)$ of $\mathcal T$.
By Zorn's Lemma, $\mathcal P$ has maximal elements. 

Suppose that $N\unlhd G$ is a compact normal subgroup and that $G/N$ is a Lie group.
Therefore the canonical map $G/PN\rTo G/QN$ is a locally trivial fiber bundle, see
\cite{Warner}, Theorem~3.58,
and hence a fibration, see \cite{Dugundji}, Theorem 4.2. 
Thus there exists a map $\phi:X\times[0,1]\rTo G/PN$
such that $(N,\phi)$ is contained in $\mathcal P$.
By Theorem~\ref{Approximation}, 
there exist arbitrarily small compact normal subgroups 
$N\unlhd G$ such that $G/N$ is a Lie group.
It follows that the maximal elements in $\mathcal P$ are of the form $(\{1\},\phi)$, 
and these elements solve the initial lifting problem.
\qed
\end{Thm}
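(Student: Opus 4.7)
The plan is to reduce to the Lie group case via Theorem~\ref{Approximation}. When $G$ is a compact Lie group, the coset projection $G/P\to G/Q$ associated to closed subgroups $P\subseteq Q$ is a locally trivial fibre bundle with fibre $Q/P$ by the standard smooth coset space construction (\cite{Warner}, Theorem~3.58), hence a fibration by \cite{Dugundji}, Theorem~4.2. The whole argument is a Zorn-style transfer of this special case to the general compact setting.

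Given a lifting problem $(f,h)$ with $f\colon X\times\{0\}\to G/P$, $h\colon X\times[0,1]\to G/Q$, and the obvious compatibility on $X\times\{0\}$, I would form the poset $\mathcal P$ of pairs $(N,\phi)$, where $N\unlhd G$ is a closed normal subgroup and $\phi\colon X\times[0,1]\to G/PN$ is a lift of the pushforward of $h$ to $G/QN$ whose restriction at $t=0$ is the pushforward of $f$. Order $\mathcal P$ by declaring $(N,\phi)\ge(M,\psi)$ when $N\subseteq M$ and $\phi$ pushes forward to $\psi$ under $G/PN\to G/PM$. The pair $(G,\bar h)$ trivially lies in $\mathcal P$ (since $G/PG$ is a point), so $\mathcal P$ is nonempty. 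The strategy is to establish inductivity, apply Zorn, and then force the underlying normal subgroup of a maximal element to be trivial.

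I expect the verification of inductivity to be the main obstacle. For a chain $\mathcal T\subseteq\mathcal P$, the natural upper bound is built from $L=\bigcap\{N\mid (N,\phi)\in\mathcal T\}$: the compatible family $\{\phi_N\}$ assembles into a map $X\times[0,1]\to\prod_{\mathcal T} G/PN$, and one must show that the natural injection $G/PL\to\prod G/PN$ is a homeomorphism onto its image (which holds because $G/PL$ is compact and the product is Hausdorff), that the image of the assembled map lies inside this copy of $G/PL$, and the analogous statement on the $Q$-side. These observations produce an upper bound $(L,\psi)\in\mathcal P$ for $\mathcal T$.

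Once Zorn produces a maximal $(N_0,\phi_0)$, I would argue $N_0=\{1\}$ as follows. If $N_0\ne\{1\}$, Theorem~\ref{Approximation} yields a closed normal $N\unlhd G$ with $G/N$ a Lie group and $N$ small enough that $N\cap N_0\subsetneq N_0$. Working inside the Lie group $G/N$, the projection $G/PN\to G/QN$ is a fibration by the Lie group case; using the homotopy lifting property to adjust a Lie-case lift to agree with the pushforward of $\phi_0$ on $G/PNN_0$, and then pulling back over the diagram relating $G/P(N\cap N_0)$, $G/PN_0$, and $G/PN$, yields a map $\phi_1\colon X\times[0,1]\to G/P(N\cap N_0)$ compatible with $\phi_0$. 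This produces a strict enlargement of $(N_0,\phi_0)$ in $\mathcal P$, contradicting maximality. Hence $N_0=\{1\}$ and $\phi_0$ is the desired lift, completing the proof.
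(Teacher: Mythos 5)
Your proposal follows essentially the same route as the paper's proof: the same poset of partial lifts $(N,\phi)$ living over the quotients $G/PN\rTo G/QN$, inductivity established by embedding $G/PL$ into the product $\prod_{\mathcal T}G/PN$ and using compactness, Zorn's Lemma, and the compact Lie case handled via the bundle structure of $G/PN\rTo G/QN$ (\cite{Warner}, \cite{Dugundji}). Your final step---strictly enlarging a maximal element with $N_0\neq\{1\}$ by passing to $N\cap N_0$ for a small $N\in\mathcal N(G)$---is actually spelled out in more detail than in the paper, which simply asserts that maximal elements have trivial normal subgroup.
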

A more general result than Theorem~\ref{ThmMM1} can be found in \cite{Sklj} Theorem~15.
As a commentary we mention the following corollary (which will not be used here).
\begin{Cor}
Let $G$ be a compact group and let $P\subseteq Q\subseteq G$ be closed subgroups. 
Then there is a long exact sequence for the homotopy groups
\[
 \rTo\pi_k(Q/P)\rTo\pi_k(G/P)\rTo\pi_k(G/Q)\rTo\relax {.}
\]
\qed
\end{Cor}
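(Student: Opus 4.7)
The plan is to obtain the long exact sequence as the standard homotopy long exact sequence of the Hurewicz fibration supplied by Theorem~\ref{ThmMM1}. Write $p:G/P\rTo G/Q$ for the natural projection $gP\mapstoo gQ$, and take $eQ\in G/Q$ as basepoint. The fiber $p^{-1}(eQ)$ consists of those cosets $gP$ with $g\in Q$, and the natural map $Q/P\rTo p^{-1}(eQ)$ sending $qP\mapstoo qP$ is a homeomorphism (it is a continuous bijection between compact Hausdorff spaces). So once the fiber is identified with $Q/P$, only the classical machinery remains.

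Next, by Theorem~\ref{ThmMM1} the map $p$ has the homotopy lifting property for all spaces, i.e.\ is a Hurewicz fibration. The long exact sequence
\[
\cdots \rTo \pi_k(Q/P) \rTo \pi_k(G/P) \rTo \pi_k(G/Q) \rTo \pi_{k-1}(Q/P) \rTo \cdots
\]
is then the standard consequence of the homotopy lifting property; one invokes for instance Spanier, \emph{Algebraic Topology}, or Hatcher, \emph{Algebraic Topology}, where the connecting homomorphism $\partial:\pi_k(G/Q)\rTo \pi_{k-1}(Q/P)$ is constructed by lifting a representing map $(I^k,\partial I^k)\rTo (G/Q,eQ)$ to $G/P$ via the homotopy lifting property, restricting to a face of $I^k$, and showing exactness at each term by elementary diagram chases in the lifting property.

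The only mild subtlety is a basepoint issue: the homotopy lifting property stated in the excerpt does not explicitly preserve basepoints, so one must either observe that Theorem~\ref{ThmMM1} actually produces based lifts whenever the homotopy sends the basepoint to the image of the initial lift (which is automatic by construction in that proof), or quote the standard fact that a surjective Hurewicz fibration with path-connected base is automatically a pointed fibration up to fiber homotopy equivalence of fibers. Either way, the based long exact sequence follows. This is the main (and only) point requiring care; everything else is a quotation of the classical fibration sequence applied to the fibration $Q/P\rTo G/P\rTo G/Q$.
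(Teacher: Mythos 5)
Your proposal is correct and is exactly the argument the paper intends: the corollary is stated with an immediate \verb|\qed| precisely because it is the standard long exact homotopy sequence of the Hurewicz fibration $Q/P\rTo G/P\rTo G/Q$ furnished by Theorem~\ref{ThmMM1}, with the fiber over $eQ$ identified with $Q/P$ as you do. Your remarks on the basepoint issue are a reasonable extra precaution but do not change the route.
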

Similarly, the Leray-Serre Spectral Sequence may be applied to such a fibration
(because every fibration is a Serre fibration).
For the next lemma we remark that \v Cech cohomology and Alexander-Spanier
cohomology agree for compact spaces. 
See also \cite{HMos} App.~II~3.15 and App.~III~2.11.
\begin{Lem}
\label{Injectivity}
Suppose that a compact totally disconnected group $D$ acts on a compact space $X$.
Then the orbit space map $p:X\rTo D\backslash X$ induces an injection
in \v Cech cohomology with rational coefficients,
\[
\check H^*(X;\QQ)\lTo^{p^*} \check H^*(D\backslash X;\QQ)\lTo0.
\]

\proof
We have by Theorem~\ref{Approximation} that
\[
 D=\varprojlim\{D/E\mid E\in\mathcal{N}(D)\}
\]
and the groups $D/E$ are finite (in other words, $D$ is a profinite group).
It follows that
\[
 X=\varprojlim\{E\backslash X\mid E\in\mathcal{N}(D)\}.
\]
For $E,F\in\mathcal N(D)$ with $F\subseteq E$ we have that $E/F$ is finite and
thus
\[
 \check H^*(F\backslash X;\QQ)\lTo\check H^*(E\backslash X;\QQ)
\]
is injective, see~\cite{BredonCompact} Theorem III.7.2.
Projective limits of compact spaces commute with \v Cech cohomology,
see~\cite{ES} X.3. The claim follows now.
\qed
\end{Lem}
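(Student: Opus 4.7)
The plan is to reduce the problem to finite-group actions by approximating the profinite group $D$ via its finite Lie quotients, and then invoke a standard transfer argument available in rational coefficients. First I would apply Theorem~\ref{Approximation} to $D$: for every $E\in\mathcal N(D)$ the quotient $D/E$ is a compact Lie group, and combined with total disconnectedness this forces $D/E$ to be finite. Hence $D$ is profinite, and the $D$-action on $X$ factors over the system of finite quotient actions, yielding the inverse-limit description $X=\varprojlim\{E\backslash X\mid E\in\mathcal N(D)\}$ in compact Hausdorff spaces, with bonding maps the orbit projections $F\backslash X\rTo E\backslash X$ for $F\subseteq E$.

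Next I would treat a single bonding map. For $F\subseteq E$ both normal in $D$, $F$ is normal in $E$ as well, and the finite group $E/F$ acts on $F\backslash X$ with orbit space precisely $E\backslash X$. Since $|E/F|$ is invertible in $\QQ$, the classical averaging construction produces a splitting of the pullback $\check H^*(E\backslash X;\QQ)\rTo\check H^*(F\backslash X;\QQ)$, so the latter map is injective; this is the content of Bredon's Theorem~III.7.2 applied to \v Cech cohomology on compact spaces.

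Finally I would pass to the limit. Since \v Cech cohomology on compact Hausdorff spaces commutes with inverse limits (Eilenberg--Steenrod, Ch.~X.3), the direct system $\{\check H^*(E\backslash X;\QQ)\}_{E\in\mathcal N(D)}$ has colimit $\check H^*(X;\QQ)$; because every bonding map in this system is injective and directed colimits of abelian groups are exact, the canonical map from each term into the colimit is injective. Specializing to the term $E=D$ yields the desired injection $\check H^*(D\backslash X;\QQ)\rTo\check H^*(X;\QQ)$. The only non-formal ingredient is the finite-group transfer step, and this is precisely where the hypothesis of rational (or, more generally, order-invertible) coefficients is essential; an integer-coefficient version already fails for $D=\ZZ/2$ acting antipodally on $S^n$.
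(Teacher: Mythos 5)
Your proposal is correct and follows essentially the same route as the paper: reduce to finite quotients $D/E$ via the approximation theorem, use the finite-group transfer (Bredon III.7.2) for injectivity at each stage, and pass to the inverse limit using continuity of \v Cech cohomology. The extra details you supply (the averaging argument, the exactness of directed colimits, and the remark on why rational coefficients are needed) are all consistent with, and merely flesh out, the paper's argument.
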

The following result is also proved in \cite{HofmannMislove} Corollary 1.9.
\begin{Thm}[Madison-Mostert]
\label{ThmMM2}
Let $G$ be a compact group, let $P\subseteq G$ be a closed subgroup, and let
$\xi: G/P\rTo G/P$ be a homotopy equivalence. Then $\xi$ is surjective.
\end{Thm}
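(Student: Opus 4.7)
The plan is to use Theorem~\ref{ThmMM1} and Lemma~\ref{HomotopyLemma} to reduce to the special case $P=\{1\}$, then settle that case by a classical degree argument in the Lie setting combined with a \v Cech-cohomological approximation argument for general compact groups.

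By Theorem~\ref{ThmMM1} applied with $\{1\}\subseteq P$, the projection $p\colon G\to G/P$ is a surjective fibration, so Lemma~\ref{HomotopyLemma} reduces the theorem to showing that every self-homotopy-equivalence $\eta\colon G\to G$ of a compact group $G$ is surjective. Any such $\eta$ induces a continuous bijection (hence a homeomorphism) of the compact totally disconnected component space $G/G^\circ$, and restricts on each component $C\subseteq G$ to a homotopy equivalence $C\to\bar\eta(C)$ between copies of $G^\circ$; so one further reduces to the case where $G$ itself is compact and connected.

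In the compact connected Lie case, $G$ is a compact connected parallelizable (hence orientable) manifold, and the classical degree argument on top rational cohomology forces any self-homotopy-equivalence to have degree $\pm 1$ and therefore to be surjective.

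For general compact connected $G$, I would use the presentation $G=\varprojlim_{N\in\mathcal N(G)}G/N$ from Theorem~\ref{Approximation}, with each $G/N$ a compact connected Lie group. Suppose $\eta\colon G\to G$ is a self-equivalence with $\eta(G)\neq G$; pick $g_0\notin\eta(G)$ and, exploiting uniform continuity on the compact group $G$, choose $N\in\mathcal N(G)$ small enough that $g_0N\cap\eta(G)N=\emptyset$. Then $\pi_N\circ\eta\colon G\to G/N$ has image contained in a proper closed subset $Y$ of the compact connected orientable $d$-manifold $G/N$, so $\check H^d(Y;\QQ)=0$, and the pullback $(\pi_N\circ\eta)^{*}=\eta^{*}\circ\pi_N^{*}$ vanishes on $\check H^d(G/N;\QQ)=\QQ$. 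Since $\eta^{*}$ is an isomorphism, $\pi_N^{*}$ itself must vanish in top degree. The contradiction will then come from showing that $\pi_N^{*}$ is in fact nonzero in top degree of $G/N$: via the identification $\check H^{*}(G;\QQ)=\varinjlim_{N'}\check H^{*}(G/N';\QQ)$, this reduces to showing injectivity of the pullback $\check H^d(G/N;\QQ)\to\check H^d(G/N';\QQ)$ along each principal $(N/N')$-bundle $G/N'\to G/N$ of compact connected Lie groups for $N'\subseteq N$ in $\mathcal N(G)$. This injectivity follows from the rational splitting of extensions of compact connected Lie groups (equivalently, from the rational collapse of the Serre spectral sequence of such a bundle), ensuring that the fundamental class of $G/N$ persists in the direct limit. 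This last cohomological step is the main technical hurdle of the argument.
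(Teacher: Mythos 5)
Your proposal is correct in its overall architecture and follows the same skeleton as the paper's proof: reduce to the case $P=\{1\}$ via the fibration $G\rTo G/P$ of Theorem~\ref{ThmMM1} together with Lemma~\ref{HomotopyLemma}, reduce further to $G$ connected, and then derive a contradiction in top-degree rational \v Cech cohomology of a Lie quotient $G/N$ approximating $G$. (Your reduction to the connected case via the induced homeomorphism of $G/G^\circ$ is sound; the paper does the same thing more briefly by translating so that the missed point becomes $1$ and restricting the homotopy to $G^\circ$.) Where you genuinely diverge is in the one step you yourself flag as the main hurdle: proving that $\pi_N^*$ is nonzero on $\check H^d(G/N;\QQ)$. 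You propose to write $\check H^*(G;\QQ)=\varinjlim_{N'}\check H^*(G/N';\QQ)$ and show injectivity of each bonding map $\check H^d(G/N;\QQ)\rTo\check H^d(G/N';\QQ)$ via rational splitting of extensions of compact connected Lie groups. This works, but note two points: (i) the kernel $N/N'$ need not be connected, so you must combine the connected-kernel splitting (Lie-algebra level, since compact Lie algebras are reductive) with a transfer argument for the residual finite covering; (ii) the paper sidesteps this entirely by applying the Complementation Theorem~\ref{Complementation} once, at the level of $G$ itself, to produce a closed connected $M\unlhd G$ with $G=MN$ and $D=M\cap N$ totally disconnected, so that $\check H^r(G/N;\QQ)\cong\check H^r(M/D;\QQ)$ injects into $\check H^r(M;\QQ)$ by Lemma~\ref{Injectivity} and hence into $\check H^r(G;\QQ)$. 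The paper's route uses only machinery already set up in Section~2, while yours requires an additional (true, standard, but not entirely free) fact about rational cohomology of surjections of compact connected Lie groups; if you want to complete your version, that fact is what you must supply in full.
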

\begin{proof}
As we noted in the proof of Lemma~\ref{HomotopyLemma}, we have to show that
for every map $h:G/P\times[0,1]\rTo G/P$ with $h_0=\id_{G/P}$, the map
$h_1$ is surjective.
Let $\mathcal C$ denote the class of all pairs $(G,P)$ of compact
groups where this holds. Our goal is to show that
$\mathcal C$ is the class of all compact group pairs.

\medskip\noindent\emph{Claim 1. If $G$ is a compact connected group,
then $(G,1)$ is in $\mathcal C$.}
Suppose that this is false. Then there exists a map
$h:G\times[0,1]\rTo G$ with $h_0=\id_G$, and $h_1:G\rTo G$ is not surjective.
By Theorem~\ref{Approximation}, there exists a closed normal
subgroup $N\unlhd G$ such that $G/N$ is a compact connected 
Lie group, say of dimension $r$,
and an element $g\in G$ such that $gN\cap h_1(G)=\emptyset$. 
By Theorem~\ref{Complementation},
there exists a compact connected subgroup $M\subseteq G$ such that
$G=MN$, and $D=M\cap N$ is totally disconnected. Then $L=M/D\cong G/N$
is a compact connected $r$-dimensional Lie group.
We note that $r>0$, since otherwise we would have $G=N$.
Then we have a commutative diagram 
\begin{diagram}[width=4em]
M&\rInto&G & \lInto^j & h_1(G) &\lTo^{h_1}&G\\
\dTo&&\dTo &&\dTo \\
L&\rTo^\cong&G/N &\lTo &G/N\setminus\{gN\}.
\end{diagram}
Because of the homotopy $h_1\simeq h_0=\id_G$, the restriction map 
$\check H^*(G;\QQ)\rTo^{j^*} \check H^*(h_1(G);\QQ)$ is injective.
We have therefore in $r$-dimensional cohomology a commutative diagram
\begin{diagram}[width=5em]
\check H^r(M;\QQ)&\lTo &\check H^r(G;\QQ) & \rTo^{j^*} & \check H^r(h_1(G);\QQ) \\
\uTo^{(1)}&&\uTo^{(2)} &&\uTo \\
\check H^r(L;\QQ)&\lTo^\cong&\check H^r(G/N;\QQ) &\rTo 
&\check H^r(G/N\setminus\{gN\};\QQ).
\end{diagram}
The map (1) is injective by Lemma~\ref{Injectivity}. Thus (2) is also injective.
The map $j^*$ is injective by the previous remark. 
The compact connected Lie group $L$ is $\QQ$-orientable, whence
$\check H^r(L;\QQ)\cong\QQ$. On the other hand, 
$\check H^r(G/N\setminus\{gN\};\QQ)=0$. This is a contradiction.

\medskip\noindent\emph{Claim 2. Suppose that $G$ is a compact group.
Then $(G,1)$ is in $\mathcal C$.}
Suppose that $a\in G$ is not in the image of $h_1:G\rTo G$. 
Then  $h'_t(g)=a^{-1}h_t(ag)$ is a map from the identity component $G^\circ$
of $G$ to itself with $h'_0=\id_{G^\circ}$, and $1$ is not in the image of
$h_1'$, contradicting Claim~1.

\medskip\noindent\emph{Claim 3. If $(G,1)$ is in $\mathcal C$ and if $P\subseteq G$
is closed, then $(G,P)$ is in $\mathcal C$.}
This follows from Claim~2 and Lemma~\ref{HomotopyLemma}, since $G\rTo G/P$ is a fibration by
Theorem~\ref{ThmMM1}.
\qed
\end{proof}

\section{The proof of Theorem A}
\label{ThmASection}
Let $V$ be a subset of a topological space $X$. We say that $V$
is \emph{contractible in $X$} if there is a map
$f:V\times[0,1]\rTo X$, $(v,t)\mapstoo f_t(v)$,
such that $f_0(v)=v$ for all $v\in V$ and
$f_1$ is a constant map from $V$ into $X$.  In the
case that $V$ is open,  we note that the image of 
$f$ in $X$ is a path-connected set with nonempty interior.
We call a topological  space $X$ \emph{piecewise contractible} 
if it satisfies the following condition.
\begin{description}
 \item[(PC)] There is a cover of $X$ by nonempty open subsets 
 which are contractible in $X$.
\end{description}
If $X$ admits a transitive group of homeomorphisms, then
(PC) is obviously equivalent to the condition that there is
some nonempty open set $V\subseteq X$ which is contractible in $X$.
\begin{Lem}
\label{ProductPC}
A product space $X\times Y$ is piecewise contractible of and only if
$X$ and $Y$ are piecewise contractible.

\proof
If $V\subseteq X$ and $W\subseteq Y$ are open subsets and if
$f:V\times[0,1]\rTo X$ and $g:W\times[0,1]\rTo Y$ contract
$V$ and $W$ in $X$ and $Y$, respectively,
then the map $h:V\times W\times[0,1]\rTo X\times Y$, 
with $h_t(v,w)=(f_t(v),g_t(w))$
contracts $V\times W$ in $X\times Y$. In this way we obtain
the required open cover of $X\times Y$.

Conversely, 
if $U\subseteq X\times Y$ is an open subset containing
the point $(x,y)$, and if $h:U\times[0,1]\rTo X\times Y$
contracts $U$ in $X\times Y$, then there is 
an open neighborhood $V\subseteq X$ of $x$ such that $V\times\{y\}\subseteq U$.
Then $f_t(v)=\pr_X(h_t(v,y))$ contracts $V$ in $X$.
\qed
\end{Lem}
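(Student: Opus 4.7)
The plan is to verify the biconditional by constructing explicit contractions in each direction, using that piecewise contractibility is witnessed by a local datum (a nonempty open set that admits a contraction into the ambient space).

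For the forward direction, I would start from open covers $\{V_\alpha\}_\alpha$ of $X$ and $\{W_\beta\}_\beta$ of $Y$ consisting of sets that are respectively contractible in $X$ and in $Y$, with contracting homotopies $f^\alpha:V_\alpha\times[0,1]\to X$ and $g^\beta:W_\beta\times[0,1]\to Y$. The family $\{V_\alpha\times W_\beta\}_{\alpha,\beta}$ is then an open cover of $X\times Y$, and the diagonal homotopy $h^{\alpha,\beta}_t(v,w)=(f^\alpha_t(v),g^\beta_t(w))$ contracts $V_\alpha\times W_\beta$ inside $X\times Y$, since at $t=0$ it is the inclusion and at $t=1$ it is a constant (product of two constants). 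This exhibits (PC) for $X\times Y$.

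For the reverse direction, the key observation is that a contractible-in-$X\times Y$ open set restricts nicely along a slice. Given $x\in X$, pick any $y\in Y$ (the case $Y=\emptyset$ makes $X\times Y$ empty and forces nothing) and choose an open $U\subseteq X\times Y$ containing $(x,y)$ with a contraction $h:U\times[0,1]\to X\times Y$. By the definition of the product topology there is a basic open rectangle $V\times W\subseteq U$ with $(x,y)\in V\times W$. Restricting $h$ to $V\times\{y\}\times[0,1]$ and then composing with $\pr_X$, I would define $f:V\times[0,1]\to X$ by $f_t(v)=\pr_X(h_t(v,y))$; this is continuous, satisfies $f_0(v)=v$, and $f_1$ is the composition of a constant map with $\pr_X$, hence constant. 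So $V$ is a nonempty open neighborhood of $x$ contractible in $X$, and running over all $x\in X$ produces an open cover satisfying (PC). The argument for $Y$ is symmetric via $\pr_Y$.

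There is no serious obstacle: both directions are formal consequences of the definitions and of the fact that product topology has a basis of rectangles. The only mild subtlety is the trivial degenerate case (one factor empty), which I would dispatch by noting that if $X\times Y=\emptyset$ then the relevant existence statements collapse, so the lemma is only asserting content when both factors are nonempty.
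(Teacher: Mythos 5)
Your proposal is correct and follows essentially the same route as the paper: the forward direction uses the product homotopy $h_t(v,w)=(f_t(v),g_t(w))$ on rectangles $V\times W$, and the converse restricts a contraction of $U\ni(x,y)$ to a slice $V\times\{y\}$ and composes with $\pr_X$. The only addition is your explicit treatment of the degenerate empty-factor case, which the paper leaves implicit.
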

We now prove three preparatory lemmas in order to obtain Theorem~A.
\begin{Lem}
\label{Lemma0}
Let $G$ be a compact group and let $K\subseteq G$ be a closed subgroup.
If $X=G/K$ is piecewise contractible, then the subgroup $G^\circ K$ is open
in $G$, the quotient space $G^\circ K/K\cong G^\circ/G^\circ\cap K$
is piecewise contractible, and there is a $G^\circ$-equivariant homeomorphism
\[
G/K\cong (G^\circ K/K)\times D
\]
for some finite set $D$.

\proof
The space $G/G^\circ K$ is compact and 
totally disconnected, see \cite{HMCompact} Prop.~10.32 and the following remark. 
Thus the free right action of the compact group $G^\circ K$
on $G$ has a totally disconnected orbit space.
Now the result \cite{HMCompact} Theorem~10.35 
on the Existence of Global Cross Sections applies and shows that
$G$ is homeomorphic to $G^\circ K \times D$ with a totally disconnected
compact space $D$ (homeomorphic to $G/G^\circ K$)
in such a fashion that the action  of $G^\circ K$ is
by multiplication on the first factor. In other words, we have
a $G^\circ K$-equivariant homeomorphism $G\cong G^\circ K\times (G/G^\circ K)$,
and therefore a $G^\circ$-equivariant homeomorphism
\[
 G/K\cong (G^\circ K/K)\times (G/G^\circ K).
\]
Now $G/K$ is piecewise contractible. Hence by Lemma~\ref{ProductPC}, 
the totally disconnected compact homogeneous space $D\cong G/G^\circ K$
is piecewise contractible. This implies that each point of $D$ is open, and hence
$D$ is finite. So $G^\circ K$ is open in $G$ and $G^\circ K/K$ is open in $G/K$.
Lemma~\ref{ProductPC} implies also that $G^\circ K/K$ is piecewise contractible.
\qed  
\end{Lem}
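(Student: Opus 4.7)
The plan is to identify a finite discrete set $D$ together with a $G^\circ$-equivariant product decomposition $G/K \cong (G^\circ K/K)\times D$, and then to read off the other assertions from this decomposition.

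First I would verify that $D := G/G^\circ K$ is compact and totally disconnected: indeed $G/G^\circ$ is a compact totally disconnected group, and $D$ is a continuous image of it. The subgroup $G^\circ K$ is closed, since $G^\circ$ is normal and $G^\circ K = KG^\circ$ is the preimage under $G\to G/G^\circ$ of the compact image of $K$. Hence $G^\circ K$ acts freely on the right on $G$ with totally disconnected orbit space $D$, and the Existence of Global Cross Sections (\cite{HMCompact}~Theorem~10.35) provides a $G^\circ K$-equivariant homeomorphism $G \cong G^\circ K\times D$ under which $G^\circ K$ acts by left multiplication on the first factor and trivially on $D$. Factoring out the right action of $K$ on the first factor then yields a $G^\circ$-equivariant homeomorphism $G/K \cong (G^\circ K/K)\times D$.

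With this decomposition in hand, Lemma~\ref{ProductPC} forces both factors to be piecewise contractible. The crux is then to show that a piecewise contractible compact totally disconnected Hausdorff space must be finite. Let $V\subseteq D$ be a nonempty open subset and $h\colon V\times[0,1]\to D$ a map with $h_0 = \id_V$ and $h_1$ constant. For each $v\in V$ the track $t\mapsto h_t(v)$ is a continuous path in $D$; since the connected components of $D$ are singletons, every such path is constant, so $v = h_1(v)$ equals the common value $h_1(V)$. Thus $V$ reduces to a single point, every point of $D$ is open, and the compact discrete space $D$ is finite. In particular $G^\circ K$ has finite index in $G$, hence is open, and $G^\circ K / K \cong G^\circ/G^\circ\cap K$ is open in $G/K$; its piecewise contractibility also follows from Lemma~\ref{ProductPC}.

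The main obstacle I foresee is purely bookkeeping: one must keep the sides of the group actions straight, so that the cross section for the right action of $G^\circ K$ produces a left-equivariant product in which the surviving right $K$-action lives entirely on the first factor, preserving the left $G^\circ$-equivariance after quotienting. The remaining ingredient — that paths in a totally disconnected Hausdorff space are constant, so piecewise contractibility collapses such a space to a discrete one — is elementary and causes no difficulty.
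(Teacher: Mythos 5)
Your argument is essentially the paper's: the same appeal to the Existence of Global Cross Sections for the free right $G^\circ K$-action gives $G\cong G^\circ K\times D$, the same quotient by $K$ gives the product decomposition, and Lemma~\ref{ProductPC} plus finiteness of the piecewise contractible totally disconnected factor $D$ finishes it; your explicit observation that paths in a totally disconnected Hausdorff space are constant is exactly the (unstated) reason behind the paper's ``each point of $D$ is open.'' One small caution: your justification that $D=G/G^\circ K$ is totally disconnected because it is ``a continuous image'' of the totally disconnected group $G/G^\circ$ is not valid as stated (the Cantor set maps onto $[0,1]$); the correct reason is that a quotient of a profinite group by a closed subgroup is again compact and totally disconnected, which is what the paper cites from \cite{HMCompact}.
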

\begin{Lem}
\label{Lemma1}
Let $G$ be a compact connected group and let $K\subseteq G$ be a closed subgroup.
If $X=G/K$ is piecewise contractible, then there exists a closed normal connected subgroup
$M\unlhd G$ with $G=KM$ (i.e. $M$ acts transitively on $X$), and
$M$ has a closed central totally disconnected subgroup $D\unlhd M$
such that $M/D$ is a compact connected Lie group.

\proof
Let $V\subseteq G/K$ be an open neighborhood of the coset $K\in G/K$ which is
contractible in $X$ by a map $f:V\times[0,1]\rTo G/K$. 
By Theorem~\ref{Approximation} there exists a closed normal subgroup $N\in\mathcal N(G)$ 
with $NK/K\subseteq V$.
Let $M\unlhd G$ be a complement of $N$ as in Theorem~\ref{Complementation}.
Since $G=MN$, the group $N$ acts transitively on $G/KM$, and we
have by Theorem~\ref{ThmMM1} a surjective fibration 
\[
NK/K\cong N/K\cap N\rTo^p N/(KM)\cap N\cong G/KM.
\]
We define $f':(N/K\cap N)\times[0,1]\rTo G/KM$
by $f'_t(n(K\cap N))=f_t(nK)KM$.
Then $f_0'=p$ and $f_1'$ is constant. It follows from Lemma~\ref{HomotopyLemma}
that $G=KM$. Let $D=M\cap N$.
Then $M/D\cong G/N$ is a compact connected Lie group. 
Since $[M,N]=\{1\}$, we have that $D$ is central in $M$.
\qed
\end{Lem}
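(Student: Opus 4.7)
The plan is to combine approximation by Lie groups (Theorem~\ref{Approximation}) with the complementation theorem (Theorem~\ref{Complementation}), using the piecewise contractibility hypothesis to force the complement to act transitively. First I would pick an open neighborhood $V\subseteq X$ of the coset $eK$ together with a contraction $f:V\times[0,1]\to X$ witnessing (PC). By Theorem~\ref{Approximation}, I can select $N\in\mathcal N(G)$ small enough that $NK/K\subseteq V$. Theorem~\ref{Complementation} then produces a closed connected normal subgroup $M\unlhd G$ with $G=MN$, $[M,N]=\{1\}$, and $D=M\cap N$ totally disconnected.

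The crux of the proof is to show that $M$ acts transitively on $X$, i.e.\ $G=KM$. Since $G=MN$, the group $N$ acts transitively on $G/KM$, and the canonical projection $p:NK/K\cong N/(N\cap K)\to G/KM\cong N/(N\cap KM)$ is a surjective fibration by Theorem~\ref{ThmMM1}. Post-composing the restriction of $f$ to $NK/K$ with the canonical map $G/K\to G/KM$ yields a homotopy $f':(NK/K)\times[0,1]\to G/KM$ from $p=f_0'$ to the constant map $f_1'$. By Theorem~\ref{ThmMM2}, every homotopy self-equivalence of the compact coset space $NK/K$ is surjective, so Lemma~\ref{HomotopyLemma} applies and forces $f_1'$, being homotopic to the surjection $p$, to be surjective as well. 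A constant surjective map into $G/KM$ means that $G/KM$ is a singleton, i.e.\ $G=KM$, as required.

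Finally, the natural isomorphism $M/D=M/(M\cap N)\cong MN/N=G/N$ exhibits $M/D$ as a compact connected Lie group by the choice of $N\in\mathcal N(G)$. Since $[M,N]=\{1\}$, every element of $D\subseteq N$ commutes with every element of $M$, so $D$ is central in $M$, giving the remaining part of the conclusion.

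The main obstacle is precisely the transitivity step $G=KM$: Theorem~\ref{Complementation} supplies $G=MN$ gratis, but without some topological input there is no reason for $M$ to hit every $K$-coset. The piecewise contractibility hypothesis is deployed exactly to exhibit the fibration $p$ as nullhomotopic, and Madison--Mostert rigidity for quotients of compact groups (Theorem~\ref{ThmMM2}, mediated through Lemma~\ref{HomotopyLemma}) then rules out any such nullhomotopic surjective fibration except when the base is a point.
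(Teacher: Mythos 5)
Your proposal is correct and follows essentially the same route as the paper: approximate by $N\in\mathcal N(G)$ with $NK/K\subseteq V$, complement via Theorem~\ref{Complementation}, and use the fibration $NK/K\to G/KM$ from Theorem~\ref{ThmMM1} together with Lemma~\ref{HomotopyLemma} to force the nullhomotopic surjection onto $G/KM$ to have a one-point target. Your explicit appeal to Theorem~\ref{ThmMM2} to verify the hypothesis of Lemma~\ref{HomotopyLemma} for $E=NK/K$ is exactly the step the paper leaves implicit.
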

The proof of the following lemma is partially adapted from \cite{HoNe} Prop.~3.5.
\begin{Lem}
\label{Lemma2}
Let $G,K,X,M$ be as in Lemma~\ref{Lemma1}. If $G$ acts faithfully
on $X$, then $M$ is a compact Lie group.

\proof We put $L=M\cap K$ and we identify $X$ with the quotient space $M/L$.
It follows from Theorem~\ref{AlmostLieGroup}
that there is a compact connected semisimple Lie group
$S$, a compact connected finite dimensional abelian group $A$, and a surjective homomorphism
$q:A\times S\rTo M$ with a totally disconnected kernel $E$.
Passing to a quotient, we can also assume that $E$
intersects the factors $\{1\}\times S$ and $A\times\{1\}$ trivially. 
We want to show that $A$ is a Lie group.

Since $M$ acts faithfully and transitively on $X$, the central subgroup $q(A\times\{1\})$ intersects
the stabilizer $L$ trivially and hence $A$ acts freely on $M/L$.
There is an open neighborhood $V\subseteq X$ of $L\in M/L$
and a map 
\[
f:V\times [0,1]\rTo X
\]
that contracts $V$ in $X$. By Theorem~\ref{Approximation},
there is a closed subgroup $B\subseteq A$ such that 
$A/B\cong \mathbb T^m$ is a compact torus of finite dimension $m$, and such
that the $B$-orbit of $L\in M/L$ is contained in $V$. 
From the action of $A\times S$ on $M/L$ we have by
Theorem~\ref{ThmMM1} a fibration 
\[
 A\times S\rTo M/L
\]
We note that $B$ acts freely on $M/L$.
Hence we may identify $B$ with the $B$-orbit of $L\in M/L$. Then $f$ gives us a map
$g:B\times[0,1]\rTo M/L$, with $g_0(b)=bL$ and $g_1$ constant.
We now lift this to 
$\tilde g:B\times[0,1]\rTo A\times S$,
such that $\tilde g_0(b)=b$ for all $b\in B$. We define
$h:B\times[0,1]\rTo A$ by $h_t(b)=\pr_A(\tilde g_t(b))$.
Now there is a covering homomorphism $\phi:B\times\RR^m\rTo A$ with discrete
kernel, with $\phi(b,0)=b$ for all $b\in B$, see \cite{HMPro} Thm.~13.17 and 13.20.
We lift $h$ to a map $\tilde h:B\times[0,1]\rTo B\times\RR^m$.
The composite
\[
 B\times [0,1]\rTo^{\tilde h}B\times\RR^m\rTo^{\pr_B} B
\]
is a homotopy between $\id_B$ and a constant map. Thus $B$ is contractible
and hence by \ref{ThmMM2} trivial.
\qed
\end{Lem}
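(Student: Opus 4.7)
\medskip\noindent\emph{Proof proposal.}
The plan is to apply Theorem~\ref{AlmostLieGroup} to $M$, obtaining a compact connected finite-dimensional abelian group $A$, simple simply connected compact Lie groups $S_1,\dots,S_r$ with product $S$, and a central surjective homomorphism $q:A\times S\to M$ with totally disconnected kernel $E$. After replacing $A\times S$ by a suitable quotient, one may further assume that $q$ is injective on each of the factors $A\times\{1\}$ and $\{1\}\times S$. Since $S$ is already a Lie group, it suffices to show that $A$ is one (necessarily a torus of finite dimension), for then $A\times S$ is a compact Lie group, $E$ is finite, and $M=(A\times S)/E$ is a Lie group by Lemma~\ref{NSSLemma}.

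To exploit the hypothesis, I put $L=M\cap K$ and identify $X=M/L$, which is legitimate because $G=KM$. The faithful $G$-action then yields a faithful $M$-action on $M/L$. For any $a\in q(A)\cap L$, centrality of $q(A)$ in $M$ gives $mam^{-1}=a\in L$ for all $m\in M$, so $a=1$. Hence $q(A)$ acts freely on $X$, and the orbit map $b\mapsto q(b)\cdot L$ embeds each closed subgroup of $A$ homeomorphically onto its orbit in $X$.

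The core step is to show $A$ has no small subgroups. Fix a contraction $f:V\times[0,1]\to X$ of an open neighborhood $V$ of $L\in X$. By Theorem~\ref{Approximation} applied to $A$, there exist arbitrarily small closed subgroups $B\subseteq A$ with $A/B\cong\TT^m$; shrink $B$ so that its free orbit $q(B)\cdot L$ lies in $V$. By Theorem~\ref{ThmMM1}, the map $(a,s)\mapsto q(a,s)\cdot L$ is a fibration $\pi:A\times S\to X$. Lift the restricted contraction to $\tilde g:B\times[0,1]\to A\times S$ with $\tilde g_0(b)=(b,1)$, and set $h=\pr_A\circ\tilde g$, a homotopy in $A$ starting at the inclusion $B\hookrightarrow A$. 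Because $A/B\cong\TT^m$, the structure theorem for finite-dimensional compact connected abelian groups supplies a covering homomorphism $\phi:B\times\RR^m\to A$ with discrete kernel and $\phi(b,0)=b$. Lifting $h$ through $\phi$ to $\tilde h:B\times[0,1]\to B\times\RR^m$ with $\tilde h_0(b)=(b,0)$ and then projecting to $B$ gives a homotopy of $\id_B$ to some map $B\to B$.

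The main obstacle is to verify that this end map is actually \emph{constant}. Constancy of $g_1$ forces $\tilde g_1$ into a single $\pi$-fiber, namely a coset of $q^{-1}(L)$ in $A\times S$; combined with the freeness of the central $q(A)$-action, the injectivity of $q$ on $A\times\{1\}$, and the specific form of $\phi$, one should be able to pin down $\pr_B\circ\tilde h_1$ to be constant. Once that is done, $B$ is a compact contractible group; applying Theorem~\ref{ThmMM2} to $\id_B$ then forces $B$ to be trivial. Since such $B$ can be chosen arbitrarily small, $A$ has no small subgroups and hence is a Lie group, completing the reduction to $M$ being a Lie group.
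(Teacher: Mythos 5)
You have reconstructed, essentially step for step, the argument that appears in the published version of this lemma, and the step you single out as ``the main obstacle'' is not a routine verification left to the reader --- it is a genuine, and in fact unfixable, gap in this approach; it is precisely the error for which the authors issued an erratum. Constancy of $g_1$ only forces $\tilde g_1(B)$ into a single fiber of $A\times S\to M/L$, i.e.\ into a coset $(a_0,s_0)H$ of $H=q^{-1}(L)$. The ingredients you propose to exploit give only that $H\cap(A\times\{1\})=\{1\}$; they do \emph{not} make $\pr_A(H)$ trivial. Elements of $L$ lift to pairs $(a,s)$ with $a\neq 1$ in general, so $\pr_A(H)$ is a nontrivial compact subgroup of $A$, the map $h_1=\pr_A\circ\tilde g_1$ takes values in a whole coset $a_0\pr_A(H)$ rather than in a point, and after lifting through $\phi$ and projecting to $B$ you end up with a self-map of $B$ about which you know essentially nothing. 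The moral is that a lift of a homotopically constant map through a fibration need not be homotopically constant, so the homotopy you build does not exhibit $B$ as contractible, and the appeal to Theorem~\ref{ThmMM2} collapses.

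The published correction abandons this lifting scheme entirely. One normalizes $q$ so that it maps $A$ isomorphically onto $\mathrm{Cen}(M)^\circ$; then $E$ is finite and $q$ is a covering homomorphism. Faithfulness gives $L\cap q(A)=\{1\}$, so $L$ embeds in the semisimple Lie group $M/q(A)$ and is a compact Lie group, whence $H=q^{-1}(L)$ is one too. Writing $(\{1\}\times S)H^\circ=T\times S$ with $T$ a finite-dimensional torus and splitting $A\cong T\times B$, one obtains $(A\times S)/H^\circ\cong B\times\bigl((T\times S)/H^\circ\bigr)$. Piecewise contractibility of $X$ passes up through the finite covering $(A\times S)/H^\circ\to (A\times S)/H\cong X$ and then, via Lemma~\ref{ProductPC}, down to the factor $B$; a piecewise contractible, connected, finite-dimensional compact abelian group is path-connected, metrizable, and hence a torus. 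Thus $A$ is a torus and $M$ is a Lie group. If you want a complete proof, you must replace your final step by an argument of this kind rather than trying to force constancy of the lifted homotopy.
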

Now we have collected all ingredients for the proof of Theorem A.
\begin{Thm}
\label{MainTheoremADetailed}
Suppose that $G$ is a compact group and that $K$ is a closed subgroup.
Assume also that the action of $G$ on the homogeneous space $X=G/K$ is
faithful, i.e. that $K$ contains no nontrivial normal subgroup of $G$.
If $X$ is piecewise contractible, then $G$ is a compact Lie group and
$X$ is a closed, but not necessarily connected manifold.

\proof
By Lemma~\ref{Lemma0} we have a $G^\circ$-equivariant homeomorphism 
$G/K\cong (G^\circ/G^\circ\cap K)\times D$, for some finite set $D$,
and $G^\circ/G^\circ\cap K$ is piecewise contractible.
By Lemma~\ref{Lemma2} there exists a closed normal connected Lie group
$M\unlhd G^\circ$ acting transitively on $G^\circ/G^\circ\cap K$.
The group $G^\circ$ decomposes by Theorem~\ref{Complementation} 
as $G^\circ=NM$, where $N\unlhd G$
is a closed normal subgroup that centralizes $M$. Since $G^\circ$ acts
faithfully and transitively
on $G^\circ/G^\circ\cap K$, it follows from Lemma~\ref{CentralizerLemma}
that $N$ is isomorphic to a closed subgroup of the Lie group
$\mathrm{Nor}_M(M\cap K)/M\cap K$.
By Lemma~\ref{NSSLemma}, the group $G^\circ$ is a compact connected Lie group. 
Now we want to show the same for the group $G^\circ K$. 
The group $\mathrm{Cen}_G(G^\circ)$ is normal in $G$ and has therefore
trivial intersection with $K$, because the action is faithful.
Thus $\mathrm{Cen}_K(G^\circ)=\{1\}$. Therefore $K$ injects into
the compact Lie group $\mathrm{Aut}(G^\circ)$. 
Now both $G^\circ$ and $G^\circ K/G^\circ\cong K/K\cap G^\circ$
are Lie groups and thus $G^\circ K$ is also a Lie group by
Lemma~\ref{NSSLemma}.
Finally, $G^\circ K$ is open in $G$, hence $G$ 
is also a Lie group.
\qed
\end{Thm}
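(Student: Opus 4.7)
The plan is to bootstrap from the preceding lemmas, reducing first to the connected identity component and then promoting the Lie-group property from $G^\circ$ outwards to $G$.

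First, I would apply Lemma~\ref{Lemma0} to obtain a $G^\circ$-equivariant splitting $G/K \cong Y \times D$, where $Y = G^\circ/(G^\circ \cap K)$ is piecewise contractible and $D$ is finite. In particular $G^\circ K$ is open of finite index in $G$, so by Lemma~\ref{NSSLemma} it suffices to exhibit both $G^\circ$ and $K/(K \cap G^\circ) \cong G^\circ K/G^\circ$ as Lie groups; manifold-ness of $X$ then follows automatically.

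To show $G^\circ$ is Lie, I would apply Lemma~\ref{Lemma1} to the connected group $G^\circ$ acting on the piecewise contractible quotient $Y$, producing a closed connected normal subgroup $M \unlhd G^\circ$ with $G^\circ = (G^\circ \cap K)M$ and $M/D_0$ Lie for some central totally disconnected $D_0 \unlhd M$. Lemma~\ref{Lemma2} then upgrades $M$ itself to a Lie group. Complementing $M$ via Theorem~\ref{Complementation} produces $G^\circ = NM$ with $N \unlhd G^\circ$ closed and centralizing $M$. Lemma~\ref{CentralizerLemma} injects $N$ continuously into the Lie group $\Nor_M(M \cap K)/(M \cap K)$, so $N$ is Lie, and Lemma~\ref{NSSLemma} assembles $G^\circ = NM$ as a Lie group.

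To show $K$ is Lie, I would consider the continuous conjugation homomorphism $K \to \Aut(G^\circ)$, whose target is a Lie group because $G^\circ$ is now a compact Lie group. The kernel $\Cen_K(G^\circ) = K \cap \Cen_G(G^\circ)$ sits inside the normal subgroup $\Cen_G(G^\circ) \unlhd G$, and I would argue from faithfulness of the $G$-action on $G/K$ that this kernel is trivial. Consequently $K$, and hence the quotient $K/(K \cap G^\circ)$, is Lie, and combining with the previous step gives $G^\circ K$ Lie via Lemma~\ref{NSSLemma}. Since $G^\circ K$ is open in $G$, $G$ itself is a Lie group and $X = G/K$ is a closed manifold.

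The main obstacle I anticipate is the transfer of faithfulness: the hypothesis gives faithfulness of the $G$-action on $G/K$, whereas Lemma~\ref{Lemma2} demands faithfulness of $G^\circ$ on $Y$, and an analogous issue arises in showing $\Cen_K(G^\circ) = \{1\}$. The kernel of $G^\circ$ on $Y$ is a priori only normal in $G^\circ$, not in $G$, so its triviality must be extracted by exploiting how the finite factor $D$ interacts with the $G^\circ$-action, or by a careful normal-core argument inside $G^\circ K$ (noting that $\Cen_G(G^\circ)$ is centralized by $G^\circ$ and $K \cap \Cen_G(G^\circ)$ is automatically $K$-invariant, so it is normal in $G^\circ K$). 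This interplay between the $G^\circ$- and $K$-structures is where the argument requires the greatest care.
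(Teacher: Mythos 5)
Your proposal is correct and follows essentially the same route as the paper's own proof: Lemma~\ref{Lemma0} to split off the finite factor, Lemmas~\ref{Lemma1} and \ref{Lemma2} to get a transitive normal Lie subgroup $M\unlhd G^\circ$, Theorem~\ref{Complementation} plus Lemma~\ref{CentralizerLemma} plus Lemma~\ref{NSSLemma} to conclude $G^\circ$ is Lie, then $K\hookrightarrow\Aut(G^\circ)$ and Lemma~\ref{NSSLemma} again for $G^\circ K$, and openness of $G^\circ K$ for $G$. The faithfulness-transfer subtlety you flag is genuine (the paper passes over it silently) and resolves exactly as you indicate: since the homeomorphism of Lemma~\ref{Lemma0} is $G^\circ$-equivariant with $G^\circ$ acting only on the first factor, the kernel of $G^\circ$ on $G^\circ/(G^\circ\cap K)$ --- like $K\cap\mathrm{Cen}_G(G^\circ)$ --- is a subgroup of $K$ normal in $G^\circ K$, hence trivial because the core of $K$ in $G^\circ K$ is trivial.
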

\begin{Cor}
\label{CorTheoremADetailed}
Let $G$ be a compact group and $K$ a closed subgroup. If $X=G/K$ is
piecewise contractible, then $G/K$ is a closed, but not necessarily connected
manifold. The quotient $G/N$, where $N=\bigcap\{gKg^{-1}\mid g\in G\}$ is
a compact Lie group that acts faithfully and transitively on $X$.
\qed
\end{Cor}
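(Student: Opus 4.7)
The corollary removes the faithfulness hypothesis from Theorem~\ref{MainTheoremADetailed} by the standard device of quotienting out the kernel of the action.

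The plan is to verify that $N=\bigcap\{gKg^{-1}\mid g\in G\}$ is a closed normal subgroup of $G$ contained in $K$, and then reduce to Theorem~\ref{MainTheoremADetailed} applied to the pair $(G/N,\,K/N)$. That $N$ is closed is immediate from the fact that it is an intersection of closed subgroups, and its normality follows because conjugation by any element of $G$ merely permutes the conjugates $gKg^{-1}$. Taking $g=1$ shows $N\subseteq K$.

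Next, I would invoke the standard identification of double quotients for topological groups: the canonical continuous bijection
\[
G/K\,\too\,(G/N)\big/(K/N),\qquad gK\mapsto (gN)(K/N),
\]
is a homeomorphism because the projection $G\rTo G/N$ is open. Thus $X$ is realized as the homogeneous space of the compact group $\bar G=G/N$ with respect to its closed subgroup $\bar K=K/N$. The crucial point, whose verification I expect to be the only substantive step, is that the induced action of $\bar G$ on $X$ is faithful, i.e.\ that $\bar K$ contains no nontrivial normal subgroup of $\bar G$. If $gN\in\bar G$ fixes every coset $hK\in X$, then $h^{-1}ghK=K$ for every $h\in G$, so $g\in\bigcap_{h\in G}hKh^{-1}=N$, which shows $gN=N$.

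Finally, piecewise contractibility is a property of the space $X$ alone and is preserved under this reinterpretation. Hence Theorem~\ref{MainTheoremADetailed} applies to the faithful transitive action of $\bar G=G/N$ on $X=\bar G/\bar K$, yielding that $\bar G$ is a compact Lie group and that $X$ is a closed (not necessarily connected) manifold. This is exactly the assertion of the corollary.
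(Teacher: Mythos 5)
Your proposal is correct and is exactly the argument the paper intends: the corollary is stated with no written proof precisely because it follows from Theorem~\ref{MainTheoremADetailed} by passing to $\bar G=G/N$ acting on $X\cong(G/N)/(K/N)$ and checking that this action is faithful, which you do correctly. Nothing is missing.
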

Theorem~\ref{MainTheoremADetailed} and Corollary~\ref{CorTheoremADetailed}
yield Theorem~A in the introduction.
For the sake of completeness, we restate the result in terms of transformation groups.
\begin{Cor}
Let $X$ be a compact locally contractible space. Suppose that a compact group $G$
acts as a transitive transformation group on $X$, via a continuous map
\[
 G\times X\rTo X.
\]
Then $X$ is a closed manifold. If the $G$-action is faithful, then $G$ is a compact Lie group.

\proof
Let $K$ denote the stabilizer of a point $x\in X$.
Since $G$ is compact, the natural continuous map $G/K\rTo X$ is a homeomorphism.
\qed
\end{Cor}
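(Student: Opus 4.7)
The plan is to reduce this statement to Corollary~\ref{CorTheoremADetailed} (and, in the faithful case, to Theorem~\ref{MainTheoremADetailed}) by realizing $X$ as a homogeneous space $G/K$ in the topologically correct sense. Pick any basepoint $x\in X$ and set $K=\{g\in G\mid gx=x\}$. Because the action map $G\times X\rTo X$ is continuous and $X$ is Hausdorff, $K$ is closed in $G$, and the orbit map $G\rTo X$, $g\mapstoo gx$, is continuous and, by transitivity, surjective. It factors through the quotient $G/K$ as a continuous $G$-equivariant bijection $\phi:G/K\rTo X$. Here I would invoke the standard compactness argument: $G/K$ is compact (as a continuous image of the compact group $G$) and $X$ is Hausdorff, so the continuous bijection $\phi$ is automatically a homeomorphism.

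The second ingredient is to transfer the topological hypothesis from $X$ to $G/K$ in the form required by the previous section. Since $\phi$ is a homeomorphism, $G/K$ is itself locally contractible. In particular there is an open neighborhood $V$ of the coset $K\in G/K$ and a contraction $f:V\times[0,1]\rTo G/K$ of $V$ in $G/K$. Since $G/K$ is homogeneous under $G$, the translates $gV$ form an open cover of $G/K$ by open sets each of which is contractible in $G/K$ (contract $gV$ by $(gv,t)\mapstoo gf_t(v)$), so $G/K$ satisfies the condition (PC) of piecewise contractibility introduced at the beginning of Section~\ref{ThmASection}.

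With these two observations in hand, the conclusion is immediate. Corollary~\ref{CorTheoremADetailed} applied to the pair $(G,K)$ shows that $G/K$, and hence $X$, is a closed manifold. If in addition the $G$-action on $X$ is faithful, then $K$ contains no nontrivial normal subgroup of $G$, so Theorem~\ref{MainTheoremADetailed} applies and $G$ is a compact Lie group. I do not expect any serious obstacle here: the substantive content lies entirely in the previously proved results, and the only thing to verify carefully is that the continuous orbit map is a homeomorphism and that local contractibility plus transitivity yields piecewise contractibility in the sense of the previous section.
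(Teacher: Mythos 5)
Your proposal is correct and follows exactly the paper's route: identify $X$ with $G/K$ via the compactness argument for the orbit map, note that local contractibility gives the (PC) condition, and invoke Corollary~\ref{CorTheoremADetailed} (and Theorem~\ref{MainTheoremADetailed} in the faithful case). You have merely spelled out details the paper leaves implicit.
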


\section{A splitting result for locally compact groups}

The following result was proved by Iwasawa in \cite{iwa}, p.~547, Theorem~11.
\begin{Thm}[Iwasawa's Local Splitting Theorem]
\label{IwasawaSplittingTheorem} 
\begin{sloppypar}Let $G$ be a locally compact connected group.
Then $G$ has arbitrarily small neighborhoods which are
of the form $NC$ such that $N$ is a compact normal subgroup and
$C$ is an open n-cell which is a 
local Lie group commuting elementwise with $N$, such
that $(n,c)\mapstoo nc$ is a homeomorphism $N\times C\rTo NC$.
\end{sloppypar}
\qed
\end{Thm}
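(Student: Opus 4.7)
The plan is to approximate $G$ by a Lie group quotient, lift a neighborhood of the identity via one-parameter subgroups, and then arrange the commutation $[N, C] = \{1\}$, which is the principal technical obstacle.

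First, I would apply the locally compact connected analogue of Theorem~\ref{Approximation} (the Gleason--Yamabe structure theorem) to find, inside the given neighborhood $U$, a compact normal subgroup $N \unlhd G$ such that $G/N$ is a finite-dimensional Lie group, say of dimension $n$. In $G/N$ I would fix a convex balanced zero-neighborhood $U_0 \subseteq \Lie(G/N)$ on which $\exp$ is a diffeomorphism onto an open n-cell $V = \exp(U_0)$, which is a local Lie group.

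Next, I would lift $V$ to $G$ using one-parameter subgroups: for each $X \in U_0$, the one-parameter subgroup $t \mapsto \exp(tX)$ of $G/N$ lifts to a one-parameter subgroup $\tilde\gamma_X : \RR \rTo G$ by path-lifting in the fibration $G \rTo G/N$ (cf.\ Theorem~\ref{ThmMM1} for the compact case; the locally compact version is analogous). The candidate n-cell is $C := \{\tilde\gamma_X(1) \mid X \in U_0\}$, naturally parametrized by $U_0$, and the multiplication map $N \times C \rTo NC$ is at least a local homeomorphism onto an identity neighborhood of $G$.

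The main obstacle is to ensure that $C$ commutes pointwise with $N$. I would analyze the conjugation homomorphism $\alpha : G \rTo \Aut(N)$, whose kernel is $Z_G(N)$, and aim to show that, by shrinking $N$ if necessary, the image $\alpha(\tilde\gamma_X(\RR))$ is trivial for each $X \in U_0$, so that $C \subseteq Z_G(N)$. Since $\alpha \circ \tilde\gamma_X$ is a one-parameter subgroup of $\Aut(N)$ and, for a compact group $N$, the identity component of $\Aut(N)$ is controlled by the finite-dimensional Lie quotients of $N$, passing to a sufficiently small $N$ should force these compositions to be constant and hence trivial. This rigidity step — converting an approximate splitting into an exact one — is the technical heart of the proof and is exactly where the detailed structure theory of compact groups and their automorphism groups becomes indispensable. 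Once the commutation is established, the map $N \times C \rTo NC$ is a homeomorphism by an open-mapping/compactness argument, and one obtains the desired local product decomposition.
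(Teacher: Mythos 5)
The paper itself does not prove this theorem: it is quoted from Iwasawa (\cite{iwa}, Theorem~11), with Yamabe's theorem invoked only to remove Iwasawa's hypothesis that $G$ be a projective limit of Lie groups. So your sketch has to be judged against the known proofs. Your first two steps (approximate $G/N$ by a Lie group with $N\subseteq U$ compact normal, then lift one-parameter subgroups of $G/N$ along $G\rTo G/N$ using a continuous linear section of $\mathcal L(G)\rTo\mathcal L(G/N)$) are a reasonable opening and match the standard strategy.

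The gap is in the commutation step, and the mechanism you propose for it does not work. You claim that after shrinking $N$ the compositions $\alpha\circ\tilde\gamma_X$ become trivial, so that $C\subseteq\Cen_G(N)$ automatically. This is false for the lifts as you have constructed them: take $G=\prod_{i\geq 1}\SU(2)$ and $N=\prod_{i>k}\SU(2)$; a one-parameter subgroup $\gamma$ of $G/N$ lifts to $\tilde\gamma(t)=(\gamma(t),\delta(t))$ for \emph{any} one-parameter subgroup $\delta$ of $N$, and such a lift fails to centralize $N$ whenever $\delta\neq 1$ --- and no further shrinking of $N$ removes these bad lifts. What is true, and what the classical proof actually uses, is Iwasawa's theorem that $\Aut(N)^\circ$ consists of inner automorphisms by elements of $N^\circ$ (the very fact this paper cites in the proof of Lemma~\ref{LemmaL2}): since $G$ is connected, $\alpha(G)\subseteq\Aut(N)^\circ=\mathrm{Inn}(N)$, whence $G=N^\circ\,\Cen_G(N)$. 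One must then \emph{choose} $C$ inside $\Cen_G(N)$, i.e.\ correct each lift by a factor from $N^\circ$; doing this continuously and multiplicatively in $X$ is genuine work (it is the analogue of what Lemma~\ref{LemmaL1} accomplishes via Pontrjagin duality in the semidirect-product setting), and your sketch does not address it.

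A second, related omission: you need $C$ to be a \emph{local Lie group}, i.e.\ locally closed under multiplication with product compatible with that of $\exp(U_0)\subseteq G/N$. The set $\{\tilde\gamma_X(1)\mid X\in U_0\}$ obtained from a merely linear section of $\mathcal L(G)\rTo\mathcal L(G/N)$ has no reason to have this property; one needs the section to land in a closed \emph{subalgebra} (in fact an ideal centralizing $\mathcal L(N)$), and producing such an ideal complement to a cofinite closed ideal in the pro-Lie algebra $\mathcal L(G)$ is a nontrivial structural statement. Even after reducing to $\Cen_G(N)$, which is a central extension of the Lie group $G/N$ by the compact group $\Cen(N)$, this point remains and must be argued. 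So the sketch correctly locates the difficulty but does not overcome it.
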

Iwasawa assumes in \emph{loc.cit.} that $G$ is a projective limit of Lie groups. 
However, in the process of settling  Hilbert's Fifth Problem (see 
\cite{montzip}, p.~184), Yamabe showed that every locally compact group
has an open subgroup which is a projective limit of Lie groups 
(see \cite{montzip}, p.~175). 
We now extend Iwasawa's Splitting Theorem 
to not necessarily connected locally compact groups. 
This result is essentially Glu\v skov's Theorem~A in \cite{Gluskov}.
It is also proved in \cite{hofmori}~Theorem~4.1 in a different way.
We begin with two lemmas. 
\begin{Lem}
\label{LemmaL1}
Let $A$ and $B$ compact connected abelian groups. Suppose that we are given 
continuous homomorphisms $\RR^m\rTo^\phi B\lTo^p A$. If $p$ is surjective, then the 
lifting problem
\begin{diagram}[width=4em]
&&A\\
&\ruDotsto^{\tilde\phi}&\dTo_p\\
\RR^m&\rTo^\phi& B
\end{diagram}
has a solution $\tilde\phi$.

\proof
We dualize the diagram. The Pontrjagin duals $\widehat A$ and $\widehat B$
are discrete torsion free abelian groups and $\widehat p$ is injective. 
Moreover, $\widehat{\RR^m}\cong\RR^m$. The dual problem
\begin{diagram}[width=4em]
&&\widehat A\\
&\ldDotsto^{\widehat{\tilde\phi}}&\uTo_{\widehat p}\\
\widehat{\RR^m}&\lTo^{\widehat\phi}&\widehat B
\end{diagram}
clearly has a solution $\widehat{\tilde\phi}$ (for example by passing to the divisible hulls
of $\widehat A$ and $\widehat B$, which are $\QQ$-vector spaces). 
Note that we do not have to worry about continuity, since
both $\widehat A$ and $\widehat B$ are discrete groups. Now we dualize the solution
$\widehat{\tilde\phi}$ of this problem.
\qed
\end{Lem}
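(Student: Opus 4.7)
My plan is to reduce the lifting problem to an extension problem in the category of discrete abelian groups by applying Pontryagin duality, and then solve that extension problem using the injectivity (equivalently, divisibility) of $\RR^m$ as an abelian group.

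First I would dualize the entire diagram. Since $A$ and $B$ are compact connected abelian groups, their Pontryagin duals $\widehat A$ and $\widehat B$ are discrete torsion-free abelian groups, and the surjection $p\colon A\rTo B$ dualizes to an injective homomorphism $\widehat p\colon \widehat B\rInto \widehat A$. The self-duality $\widehat{\RR^m}\cong\RR^m$ turns $\phi$ into a homomorphism $\widehat\phi\colon \widehat B\rTo\RR^m$. A continuous lift $\tilde\phi\colon\RR^m\rTo A$ with $p\circ\tilde\phi=\phi$ corresponds, after dualization, to a homomorphism $\widehat{\tilde\phi}\colon\widehat A\rTo\RR^m$ with $\widehat{\tilde\phi}\circ\widehat p=\widehat\phi$.

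Next I would solve the dual extension problem. Since $\RR^m$ is a $\QQ$-vector space it is a divisible, hence injective, abelian group. Therefore any homomorphism from a subgroup of an abelian group into $\RR^m$ extends to the whole group; applied to the injection $\widehat p\colon\widehat B\rInto\widehat A$, this produces the required extension $\widehat{\tilde\phi}\colon\widehat A\rTo\RR^m$ of $\widehat\phi$. Equivalently, as the author suggests, one may embed $\widehat A$ and $\widehat B$ into their divisible hulls $\widehat A\otimes\QQ$ and $\widehat B\otimes\QQ$; in the category of $\QQ$-vector spaces, $\widehat B\otimes\QQ$ is a direct summand of $\widehat A\otimes\QQ$, and one composes a splitting with the $\QQ$-linearization of $\widehat\phi$ to obtain $\widehat{\tilde\phi}$. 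No continuity issues arise because all groups involved at this stage are discrete.

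Finally I would dualize back to return to the original category: the dual of $\widehat{\tilde\phi}$ is a continuous homomorphism $\tilde\phi\colon\RR^m\rTo A$, and the identity $\widehat{\tilde\phi}\circ\widehat p=\widehat\phi$ becomes $p\circ\tilde\phi=\phi$, which is the desired lift. I do not expect any real obstacle: the only point worth checking is that $\widehat A$ and $\widehat B$ really are torsion-free (this is the duality correspondence between compact connected and discrete torsion-free abelian groups), and that $\RR^m$ is injective as an abstract abelian group, both of which are classical facts.
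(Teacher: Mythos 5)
Your proposal is correct and follows essentially the same route as the paper: dualize, solve the resulting extension problem over the injection $\widehat p\colon\widehat B\rInto\widehat A$ using the divisibility (injectivity) of $\RR^m$ as an abelian group, and dualize back, with continuity automatic because the domain $\widehat A$ is discrete. The only nitpick is the phrase ``all groups involved at this stage are discrete''---$\RR^m$ is not, but only the discreteness of the source $\widehat A$ matters, which is exactly the point the paper makes.
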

\begin{Lem}
\label{LemmaL2}
Let $L$ be a simply connected Lie group and let $N$ be a compact group.
Let $\alpha:L\rTo\Aut(N)$ be a homomorphism, and consider the 
semidirect product $N\rtimes_\alpha L$. If $L$ centralizes under this map the
identity component $N^\circ$, then there is an isomorphism
\[
 \phi:N\rtimes_\alpha L\rTo^\cong N\times L
\]
which restricts to the identity on $N\times 1$.

\proof The $\alpha$-image of $L$ is contained in the identity component
$\Aut(N)^\circ$, because $L$ is connected. On the other hand, we
have a natural injective map $N/\mathrm{Cen}(N)=\mathrm{Inn}(N)\rInto\Aut(N)$.
Under this map, $\Aut(N)^\circ\cong N^\circ/\mathrm{Cen}(N)\cap N^\circ$,
see \cite{iwa} p.~514, Theorem 1$'$ or \cite{HMCompact}
Theorem~9.82.
The subgroup of $\Aut(N)^\circ$ that centralizes $N^\circ$
is therefore isomorphic to $\mathrm{Cen}(N^\circ)/\mathrm{Cen}(N)\cap N^\circ$.
Thus the $L$-action on $N$ is given by a map 
$L\rTo (\mathrm{Cen}(N^\circ)/\mathrm{Cen}(N)\cap N^\circ)^\circ$.
The target group of this map is a quotient of the compact connected abelian group
$\mathrm{Cen}(N^\circ)^\circ$. Since this group is in particular abelian,
we end up with a map 
$L\rTo^{\mathrm{ab}} L/[L,L]\rTo^\psi (\mathrm{Cen}(N^\circ)/\mathrm{Cen}(N)\cap N^\circ)^\circ$.
Now $L$ is simply connected and thus $L/[L,L]\cong\RR^m$, for some $m\geq 0$.
By Lemma \ref{LemmaL1}, there exists a lift $\tilde\psi:L/[L,L]\rTo \mathrm{Cen}(N^\circ)^\circ$.
Now we consider the composite $\beta=j\circ\tilde\psi\circ \mathrm{ab}$,
\[
 L\rTo^{\mathrm{ab}}L/[L,L]\rTo^{\tilde\psi}\mathrm{Cen}(N^\circ)^\circ\rTo^j N.
\]
where $j(x)=x^{-1}$. Then we have
\[
\beta(\ell)^{-1}n\beta(\ell)=\alpha(\ell)(n)
\]
for all $n\in N$ and $\ell\in L$.
Now
\[
 N\times L\rTo N\rtimes L,\qquad\phi(n,\ell)=(n\beta(\ell),\ell)
\]
is the desired isomorphism.
\qed
\end{Lem}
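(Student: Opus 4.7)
My plan is to construct a continuous map $\beta\colon L\to N$ satisfying the conjugation identity $\alpha(\ell)(n)=\beta(\ell)^{-1}n\beta(\ell)$ for all $n\in N$ and $\ell\in L$, and then to set $\phi\colon N\times L\to N\rtimes_\alpha L$ by $\phi(n,\ell)=(n\beta(\ell),\ell)$. A direct computation in $N\rtimes_\alpha L$ shows that $\phi$ is a homomorphism whenever $\beta$ satisfies the cocycle identity $\beta(\ell_1\ell_2)=\beta(\ell_1)\cdot\alpha(\ell_1)(\beta(\ell_2))$. I plan to arrange $\beta(L)\subseteq N^\circ$, where $\alpha$ acts trivially by hypothesis, so that the cocycle condition degenerates to $\beta$ being an honest homomorphism into an abelian group. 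The normalization $\beta(1)=1$ then gives $\phi|_{N\times 1}=\id$, and bijectivity is clear with continuous inverse $(m,\ell)\mapsto(m\beta(\ell)^{-1},\ell)$.

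To build $\beta$, I first cut down the target of $\alpha$. Connectedness of $L$ gives $\alpha(L)\subseteq\Aut(N)^\circ$. Invoking the structural theorem for compact-group automorphisms (\cite{iwa} Theorem~$1'$, or \cite{HMCompact} Theorem~9.82), conjugation induces an isomorphism $N^\circ/(\Cen(N)\cap N^\circ)\cong\Aut(N)^\circ$. Under this isomorphism, the subgroup of $\Aut(N)^\circ$ that fixes $N^\circ$ pointwise corresponds to $(\Cen(N^\circ)\cap N^\circ)/(\Cen(N)\cap N^\circ)$, a quotient of the compact connected abelian group $\Cen(N^\circ)^\circ$. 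Our hypothesis puts $\alpha(L)$ inside this abelian quotient, so $\alpha$ factors through the abelianization $L/[L,L]$; since $L$ is simply connected, $L/[L,L]\cong\RR^m$ for some $m\geq 0$ (\cite{HMPro}).

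At this point, the lifting problem $\RR^m\to\Cen(N^\circ)^\circ/(\Cen(N)\cap N^\circ)$ against the quotient out of $\Cen(N^\circ)^\circ$ is exactly the setup of Lemma~\ref{LemmaL1}, which supplies a continuous homomorphism $\tilde\psi\colon L/[L,L]\to\Cen(N^\circ)^\circ$ lifting $\alpha$. Setting $\beta=j\circ\tilde\psi\circ\ab$ with $j(x)=x^{-1}$ then gives the desired map: the inversion $j$ converts the left-handed conjugation $\alpha(\ell)(n)=\tilde\psi(\ell)\,n\,\tilde\psi(\ell)^{-1}$ read off from the inner-automorphism identification into the right-handed form required by my cocycle identity. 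The main obstacle is really the first ingredient, namely the nontrivial structural identification of $\Aut(N)^\circ$ with inner automorphisms by $N^\circ$; granted that, the remainder is a direct invocation of Lemma~\ref{LemmaL1} together with routine bookkeeping to verify that $\phi$ is a homomorphism.
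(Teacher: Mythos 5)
Your proposal is correct and follows essentially the same route as the paper: the identification $\Aut(N)^\circ\cong N^\circ/(\Cen(N)\cap N^\circ)$, the observation that the centralizer of $N^\circ$ therein is a quotient of the compact connected abelian group $\Cen(N^\circ)^\circ$, factoring through $L/[L,L]\cong\RR^m$, lifting via Lemma~\ref{LemmaL1}, and twisting by $\beta=j\circ\tilde\psi\circ\mathrm{ab}$. Your explicit verification of the cocycle identity and its degeneration to a homomorphism condition (since $\beta(L)\subseteq N^\circ$ is centralized by $\alpha(L)$) is a welcome spelling-out of a step the paper leaves implicit.
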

The following result, which is Theorem~C in our introduction, 
is a global version of Glu\v skov's Theorem~A in \cite{Gluskov}.
\begin{Thm}
\label{Theorem1-1} 
Let $G$ be a locally compact group. Then
for every identity neighborhood $U$
there is a compact  subgroup $N$ contained in $U$, a simply connected 
Lie group $L$, and an open and continuous homomorphism 
$\phi:N\times L\rTo G$ with discrete kernel such that $\phi(n,1)=n$
for all $n\in N$.

\proof We divide the proof into several steps.

\medskip\noindent\emph{Claim 1. The result holds if $G$ is connected.}
We apply Iwasawa's Local Splitting Theorem~\ref{IwasawaSplittingTheorem}.
The fact that $C$ is a  local Lie group on an open $n$-cell means that
there is a Lie group $L$, an $n$-cell identity neighborhood
$W\subseteq L$, and a homeomorphism $\gamma: W\rTo C$ for which $x,y,xy\in W$ implies
$\gamma(xy)=\gamma(x)\gamma(y)$. We may assume $L$ to be simply
connected. Then $\gamma$ extends to a unique homomorphism of topological groups
$\gamma: L\rTo G$, see \cite{HMCompact}, Corollary~A2.26 and A2.27. 
Since $C$ is in the
centralizer of $N$, so is the subgroup $\gamma(L)$ generated by $C$.
Hence the map 
\[
\phi: N\times L\rTo G,\qquad\phi(n,\ell)=n\gamma(\ell),
\]
is a continuous homomorphism which maps $N\times W$
homeomorphically  onto the identity neighborhood $NC$ of $G$.
Thus $\ker\phi$ is discrete and $\phi$ is locally open and hence open.
Clearly $\phi(n,1)=n$. The  assertion follows in this special case.

\medskip\noindent\emph{Claim 2. The result holds if $G/G^\circ$ is compact.}
Then every identity neighborhood contains a compact normal subgroup $P$ such 
that $G/P$ is a Lie group, see \cite{montzip} Ch.~4.6,~p.~175. 
Let $U\subseteq G$ be an identity neighborhood.
By Theorem~\ref{IwasawaSplittingTheorem}, 
the identity component $G^\circ$ has a relatively open identity 
neighborhood $QC\cong Q\times C$ with a compact
normal subgroup $Q\unlhd G^\circ$ contained in $U$
and  an open $n$-cell local Lie group $C$. 
We may assume that the $n$-cell $C$ 
contains no subgroup besides $\{1\}$.
Let $\psi: Q\times L\rTo G^\circ$ be the 
surjective homomorphism guaranteed by Claim~1 of the proof,
and put
\[
 \gamma:L\rTo G,\qquad\gamma(\ell)=\psi(1,\ell).
 \]
Let ${\mathcal N}(G)$ be the filter basis
of compact normal subgroups $P\unlhd G$
such that $G/P$ is a Lie group.
Since the filter basis ${\mathcal N}(G)$
converges to $1$, there is a $P\in{\cal N}(G)$ such that $P\subseteq U$ 
and $P\cap G^\circ\subseteq QC$. Since $C$ contains no nontrivial
subgroups, we conclude that $P\cap G^\circ\subseteq Q$
(because $\pr_C(P\cap G^\circ)$ is a subgroup of $C$).
Since $G/P$ is a Lie group and $G/G^\circ$ is compact, 
$G/PG^\circ$ is finite. Thus $PG^\circ$ is open in $G$, and we 
may as well assume that $G=PG^\circ$. 
The group $\gamma(L)$ centralizes $Q$ and normalizes $P\unlhd G$.
Therefore it normalizes the compact group $N=PQ\subseteq G$.
We put
\[
\alpha:L\rTo\Aut(N),\qquad \alpha(\ell)(n)=\gamma(\ell)n\gamma(\ell)^{-1}
\]
Then the semidirect product $N\rtimes_{\alpha} L$ 
has a continuous homomorphism
\[
\phi:N\rtimes_\alpha L\rTo G,\qquad \phi(n,\ell)=n\gamma(\ell).
\]
Its image contains  $P$,
$Q$ and $\gamma(L)$. Therefore it maps onto $PQ\gamma(L)=P G^\circ =G$.
Since $L$ and $N$ are $\sigma$-compact, the group
$N\rtimes_\alpha L$ is $\sigma$-compact. By the Open Mapping Theorem 
for Locally Compact Groups (see e.g.~\cite{HMCompact} p.~669), 
$\phi$ is open. We claim that the kernel is discrete.
Let $W=\gamma^{-1}(C)$. Then $N\times W$ is an identity
neighborhood of $N\rtimes_\alpha L$.
Suppose that $(n,w)\in (N\times W)\cap\ker\phi$.
Then $n=\gamma(w)^{-1}=c\in C$ and $n=qp$ for some $p\in P$ and $q\in Q$.
Thus $p=q^{-1}c\in QC\subseteq G^\circ$. It follows that
$p\in P\cap G^\circ\subseteq Q$. Thus we may assume that $p=1$, and this
implies $n=q=c=1$. Thus $\phi$ has a discrete kernel.
Next we note that $N^\circ\subseteq G^\circ$, and thus
$N^\circ\subseteq Q$. Therefore $L$ centralizes $N^\circ$.
By Lemma~\ref{LemmaL2}, we have an isomorphism 
\[
 N\times L\rTo N\rtimes_\alpha L
\]
which is the identity on $N\times\{1\}$. Thus we have proved Claim~2.

\medskip\noindent\emph{Claim 3. The result holds for arbitrary locally compact
groups $G$.}
In such a group $G$, there is an open subgroup $H\subseteq G$ such that
$H/H^\circ$ is compact. 
By Claim~2 we find a simply connected Lie group
$L$, a compact subgroup $N$ and an open homomorphism
\[
 N\times L\rTo H\rInto G.
\]
\qed
\end{Thm}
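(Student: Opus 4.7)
The plan is to treat three nested cases in succession: first $G$ connected, then $G/G^\circ$ compact, and finally the general locally compact case, each building on the previous. The unifying idea is to glue Iwasawa's Local Splitting Theorem~\ref{IwasawaSplittingTheorem} on the identity component together with Yamabe's theorem (every identity neighborhood in a group with compact $G/G^\circ$ contains a compact normal $P$ with $G/P$ Lie).

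For the connected case, I would apply Theorem~\ref{IwasawaSplittingTheorem} to obtain an identity neighborhood $NC$ with $N\subseteq U$ compact normal and $C$ an $n$-cell local Lie group centralizing $N$. The local Lie group structure on $C$ is carried by some Lie group, of which I take the universal cover $L$, a simply connected Lie group. The embedding of an $n$-cell $W\subseteq L$ onto $C$ is a local homomorphism, and by simple connectedness of $L$ it extends uniquely to a continuous homomorphism $\gamma\colon L\rTo G$. Since $\gamma(L)$ is generated by $C$ it centralizes $N$, so $\phi(n,\ell)=n\gamma(\ell)$ is a continuous homomorphism $N\times L\rTo G$ which is a homeomorphism on the open set $N\times W$; hence it is locally open, open, and has discrete kernel, with $\phi(n,1)=n$ by construction.

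The case $G/G^\circ$ compact is the heart of the argument. Pick a compact normal $P\subseteq U$ with $G/P$ Lie, small enough that $P\cap G^\circ$ is contained in the subgroup $Q$ provided by the connected case applied to $G^\circ$. This can be arranged because $Q\times C$ is an identity neighborhood of $G^\circ$, and $C$ may be shrunk so as to contain no nontrivial subgroup of $G^\circ$; the $C$-component of $P\cap G^\circ$ is then forced to be trivial. Since $G/P$ is Lie and $G/G^\circ$ is compact, $PG^\circ$ is open, so after replacing $G$ by $PG^\circ$ we may assume $G=PG^\circ$. Set $N=PQ$. Then $N$ is compact, and $\gamma(L)\subseteq G^\circ$ normalizes $N$ (it centralizes $Q$ and normalizes $P$), yielding an action $\alpha\colon L\rTo\Aut(N)$ and a continuous homomorphism $\phi\colon N\rtimes_\alpha L\rTo G$, surjective onto $PQ\gamma(L)=PG^\circ=G$. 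Discreteness of $\ker\phi$ follows from the local product decomposition together with the absence of subgroups in $C$, and openness of $\phi$ follows from $\sigma$-compactness via the Open Mapping Theorem. Finally, since $N^\circ\subseteq G^\circ$ is centralized by $\gamma(L)$, Lemma~\ref{LemmaL2} converts the semidirect product into a direct product via an isomorphism fixing $N\times\{1\}$ pointwise. For the fully general case, one uses the standard fact that every locally compact group contains an open subgroup $H$ with $H/H^\circ$ compact; applying the previous step to $H$ with neighborhood $U\cap H$ and postcomposing with the open inclusion $H\hookrightarrow G$ yields the claim.

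The main obstacle will be the middle step: reconciling the two compact normal subgroups $P$ (from Yamabe) and $Q$ (from Iwasawa) into a single compact $N=PQ$ whose combined behavior gives a discrete kernel. Equally crucial, and easy to overlook, is checking that the $L$-action on $N$ is trivial on $N^\circ$, so that Lemma~\ref{LemmaL2} upgrades the semidirect product to a direct product. Without that last step one obtains only a semidirect decomposition, which is strictly weaker than the stated theorem and fails the conclusion $\phi(n,1)=n$ that distinguishes Theorem~C from a mere global version of Iwasawa's theorem.
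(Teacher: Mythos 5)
Your proposal follows the paper's proof essentially step for step: the same three-stage reduction (connected case via Iwasawa's Local Splitting Theorem and extension of the local homomorphism from the simply connected cover, then the $G/G^\circ$ compact case by combining Yamabe's compact normal $P$ with Iwasawa's $Q$ into $N=PQ$ and a semidirect product straightened out by Lemma~\ref{LemmaL2}, then passage to an open subgroup), with the same key verifications of discreteness of the kernel via the subgroup-free cell $C$ and openness via the Open Mapping Theorem. The argument is correct and matches the paper's route, including the observation that Lemma~\ref{LemmaL2} is what upgrades the semidirect product to the direct product required for $\phi(n,1)=n$.
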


\section{The proof of Theorem B}

We now extend our results from Section~\ref{ThmASection} to locally compact groups. 
\begin{Num}\textbf{\!\!\!Locally contractible spaces\ }
\label{PropertiesOfLCSpaces}
By way of comparison we remind the reader that a space $X$ is 
called {\it locally contractible} if it satisfies the following condition.
\begin{description}
 \item[(LC*)] For every point $x\in X$ and every neighborhood $V$ of $x$ there
 exists a neighborhood $U\subseteq V$ of $x$ which is contractible in $V$.
\end{description}
A locally contractible space is locally arcwise connected and piecewise 
contractible, that is,
\begin{center}
 (LC*) $\Longrightarrow$ (PC).
\end{center}
A neighborhood retract of a locally contractible space is 
locally contractible, see \cite{HY}~Theorem 4--42 or \cite{HuRetracts}~I.9.
It follows that 
a product of two spaces is locally contractible if and only if the two factors
are locally contractible. We also note that
being locally contractible is a local property of a space.
\end{Num}
In view of Theorem~\ref{Theorem1-1} the following lemma is the main step in this extension.
\begin{Lem}[The Reduction Lemma]
\label{ReductionLemma}
Let $L$ be a Lie group and let $N$ be a compact
group. Suppose that $K$ is a closed subgroup of the locally compact group
$G=N\times L$. If $X=G/K$ is locally contractible, then $X$ is a manifold.
If $N$ acts faithfully on $X$, then $N$ is a compact Lie group and hence
$G$ is a Lie group.

\proof
Since $N$ is compact and normal in $G$, the group $NK\subseteq G$ is closed,
see \cite{HewittRoss} II 4.4.
Because $N$ is a direct factor in $G$, the group $NK$ splits as
$NK=N\times H$, and $H\subseteq L$ is a closed Lie subgroup.
The natural map 
\begin{diagram}
N\times H &\rTo &N\times L\\
&&\dTo\\
&& (N\times L)/(N\times H)
\end{diagram}
is a locally trivial principal bundle because this is true for the map
$L\rTo L/H$, see
\cite{Warner}, Theorem~3.58. It follows that the associated bundle
\begin{diagram}
(N\times H)/K&\rTo&(N\times L)/K\\
&&\dTo\\
&&(N\times L)/(N\times H)
\end{diagram}
is also locally trivial. 
Since  $(N\times L)/K$ is locally contractible,
the same is true for the fiber $(N\times H)/K$ by the remarks
in \ref{PropertiesOfLCSpaces}. 
Now the compact group $N$ acts transitively on the fiber
\[
F=(N\times H)/K=NK/K\cong N/N\cap K.
\]
By Theorem~\ref{MainTheoremADetailed}, the homogeneous space
$F$ is a closed manifold. Since the base space
$B=(N\times L)/(N\times H)\cong L/H$ is also a manifold and since 
$X=(N\times L)/K$ is locally homeomorphic to $B\times F$,
we have that $X$ is a manifold.
If $N$ acts faithfully on $G/K$, then $N$ acts 
faithfully on $F$, because a subgroup $P\subseteq N\cap K$ which is normal 
in $N$ is also normal in $N\times L$.
Hence $N$ is a compact Lie group in this case.
\qed
\end{Lem}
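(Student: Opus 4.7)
The plan is to realize $X=G/K$ as the total space of a locally trivial fiber bundle over a smooth manifold whose fiber is a compact homogeneous space of $N$, and then feed the fiber into Theorem~\ref{MainTheoremADetailed}.

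\textbf{Setting up the bundle.} Since $N$ is compact and normal in $G$, the product $NK$ is closed. Compactness of $N$ also makes the projection $\pr_L:G\rTo L$ a closed map, so $H=\pr_L(K)$ is a closed subgroup of the Lie group $L$, hence a Lie subgroup by Cartan's closed subgroup theorem; a short direct calculation gives $NK=N\times H$, and in particular $K\subseteq N\times H$. The classical fact that $L\rTo L/H$ is a locally trivial principal $H$-bundle (\cite{Warner}, Theorem~3.58) immediately produces a locally trivial principal $(N\times H)$-bundle $N\times L\rTo (N\times L)/(N\times H)\cong L/H$. Dividing the fibers by $K$ exhibits $X$ as the total space of an associated locally trivial bundle $F\rTo X\rTo L/H$ whose fiber is $F=(N\times H)/K=NK/K\cong N/(N\cap K)$, a compact homogeneous space of $N$.

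\textbf{Transferring the topology.} Because local contractibility is a local property (see~\ref{PropertiesOfLCSpaces}) and $X$ is locally homeomorphic to a product $U\times F$ with $U$ open in the manifold $L/H$, the locally contractible space $X$ forces both factors of this product to be locally contractible, so $F$ is piecewise contractible. Quotienting $N$ by the kernel of its action on $F$ and applying Theorem~\ref{MainTheoremADetailed} then shows that $F$ is a closed manifold. Since $X$ is locally a product of manifolds, $X$ itself is a manifold. For the faithfulness statement, any subgroup $P\subseteq N\cap K$ normal in $N$ is automatically normal in $G=N\times L$ (because $L$ centralizes $N$), so if $N$ acts faithfully on $X$ then it acts faithfully on $F$; a second application of Theorem~\ref{MainTheoremADetailed} forces $N$ to be a compact Lie group, and therefore $G=N\times L$ is a Lie group.

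\textbf{Expected obstacle.} The most delicate step is transporting local contractibility from $X$ down to the fiber $F$: this requires the bundle to be genuinely \emph{locally trivial}, not merely a fibration, which is precisely why we need $L/H$ to be a smooth manifold and why the global splitting produced by Theorem~\ref{Theorem1-1} is exactly the right input to make this reduction possible. Beyond that, the argument is just bookkeeping around the associated-bundle construction together with a direct appeal to Theorem~A, which has already absorbed the hard topological work through the Madison--Mostert fibration machinery and the approximation theory of compact groups.
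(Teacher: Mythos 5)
Your argument is correct and follows essentially the same route as the paper: split $NK=N\times H$, use the local triviality of $L\rTo L/H$ to exhibit $X$ as a locally trivial bundle over $L/H$ with compact fiber $F=NK/K\cong N/(N\cap K)$, transfer local contractibility to $F$, and apply Theorem~\ref{MainTheoremADetailed} to the (faithfully acting quotient of the) compact group $N$. Your explicit identification $H=\pr_L(K)$ via closedness of the projection, and your remark about first passing to the kernel of the $N$-action on $F$ before invoking Theorem~\ref{MainTheoremADetailed}, are minor refinements of detail rather than a different proof.
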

\begin{Cor}
\label{XIsManifold}
Let $G$ be a locally compact group and let $K\subseteq G$ be a closed subgroup.
If $X=G/K$ is locally contractible, then $X$ is a manifold.

\proof
Let $\phi:N\times L\rTo G$ be as in Theorem~\ref{Theorem1-1}. Then
$H=\phi(N\times L)$ is an open subgroup of $G$. Therefore
$H$ has an open orbit $Y=HK/K\subseteq X$. By Lemma~\ref{ReductionLemma},
this open set $Y$ is a manifold. 
It follows from the homogeneity of $X$ that $X$ itself is a manifold.
\qed
\end{Cor}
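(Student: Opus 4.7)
The plan is to reduce the general case to the splitting situation already handled by the Reduction Lemma~\ref{ReductionLemma}. Specifically, I would pull $X$ back along the local splitting homomorphism provided by Theorem~\ref{Theorem1-1} so as to realize an open piece of $X$ as a homogeneous space of a group of the form $N \times L$, where $N$ is compact and $L$ is a Lie group.

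First, I apply Theorem~\ref{Theorem1-1} (with, say, $U = G$) to obtain a compact subgroup $N \subseteq G$, a simply connected Lie group $L$, and an open continuous homomorphism $\phi : N \times L \rTo G$ with discrete kernel. The image $H = \phi(N \times L)$ is then an open subgroup of $G$, so the orbit $Y = HK/K$ is an open subset of $X = G/K$. Since local contractibility is a local property (see \ref{PropertiesOfLCSpaces}), the open set $Y$ is locally contractible.

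Next, I would realize $Y$ as a quotient of $N \times L$. Set $K' = \phi^{-1}(K)$; this is a closed subgroup of $N \times L$. The restricted map $\phi : N \times L \rTo H$ is a continuous surjective open homomorphism, hence a quotient map, so the induced map $(N \times L)/K' \rTo H/(H \cap K)$ is a homeomorphism, and the latter is $H$-equivariantly homeomorphic to $Y$. Thus $Y \cong (N \times L)/K'$ is locally contractible, and the Reduction Lemma~\ref{ReductionLemma} applied to $N \times L$ and the closed subgroup $K'$ shows that $Y$ is a manifold.

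Finally, the homogeneity of $X$ propagates the manifold property to all of $X$: for every $x \in X$, choose $g \in G$ with $gK \in Y$ mapping to $x$ (or more directly, observe that the translates $gY$ with $g \in G$ cover $X$ and that left translation by $g$ is a self-homeomorphism of $X$ taking $Y$ homeomorphically onto $gY$). Since being a manifold is a local property, $X$ itself is a manifold. The only delicate point in the plan is the identification $Y \cong (N \times L)/K'$, which hinges on the openness of $\phi$ from Theorem~\ref{Theorem1-1}; this is essentially bookkeeping and should present no real obstacle.
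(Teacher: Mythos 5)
Your proposal is correct and follows the paper's own argument: apply Theorem~\ref{Theorem1-1}, pass to the open orbit $Y=HK/K$ of the open subgroup $H=\phi(N\times L)$, invoke Lemma~\ref{ReductionLemma}, and conclude by homogeneity. The identification $Y\cong(N\times L)/\phi^{-1}(K)$ that you spell out via the openness of $\phi$ is exactly the step the paper leaves implicit, and your verification of it is sound.
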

The following consequence is immediate. This is Theorem~B in the introduction.
\begin{Thm}
\label{GIsLieGroup}
Let $G$ be a locally compact group and let $K\subseteq G$ be a closed subgroup.
Assume that $X=G/K$ is locally contractible and that $X$ is connected or that
$G/G^\circ$ is compact. If $G$ acts faithfully on $X$, then $G$ is a Lie group.

\proof
Assume first that $X$ is connected. Then
we argue similarly as in the proof of Corollary~\ref{XIsManifold}
and we consider the open subgroup $H=\phi(N\times L)$.
The $H$-orbit of the coset $gK\in G/K$ is the open set $HgK/K$.
Because $X$ is connected, this implies that $H$ acts transitively
on $X$. In particular, $N\subseteq H$ acts faithfully on $H/H\cap K\cong G/K$.
By Lemma~\ref{ReductionLemma}, $H$ is a Lie group. Since $H\subseteq G$
is open, $G$ is also a Lie group.
If $G/G^\circ$ is compact, then $G$ is a Lie group by
\cite{montzip}~Ch.~6.3, Corollary on p.~243.
\qed
\end{Thm}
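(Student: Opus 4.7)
The plan is to split the argument along the two disjunctive hypotheses, handling the connected case by hand via Theorem~\ref{Theorem1-1} and the Reduction Lemma, and disposing of the compact $G/G^\circ$ case by combining Corollary~\ref{XIsManifold} with a classical theorem of Montgomery-Zippin.

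For the case when $X$ is connected, I would first invoke Theorem~\ref{Theorem1-1} to produce a compact subgroup $N\subseteq G$, a simply connected Lie group $L$, and an open continuous homomorphism $\phi:N\times L\to G$ with discrete kernel satisfying $\phi(n,1)=n$. The image $H=\phi(N\times L)$ is an open subgroup of $G$, so every $H$-orbit in $X$ is open; since $X$ is connected and is partitioned by these orbits, $H$ must act transitively on $X$, and so $X\cong H/(H\cap K)\cong (N\times L)/K_0$, where $K_0=\phi^{-1}(H\cap K)$. Because $G$ acts faithfully on $X$, the subgroup $N\subseteq G$ acts faithfully on $X$; via the identity $\phi(n,1)=n$, this translates to $N$ (as a factor of $N\times L$) acting faithfully on $(N\times L)/K_0$. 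Lemma~\ref{ReductionLemma} then forces $N$ to be a Lie group, so $N\times L$ is a Lie group, and since $\phi$ is an open surjection onto $H$ with discrete kernel, $H$ is a Lie group as well. Being an open subgroup of $G$, this makes $G$ a Lie group.

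For the case when $G/G^\circ$ is compact, Corollary~\ref{XIsManifold} already gives that $X$ is a manifold, and the conclusion reduces to the classical Montgomery-Zippin statement (\cite{montzip}, Ch.~6.3) that a locally compact group with $G/G^\circ$ compact acting faithfully on a manifold is a Lie group.

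The main subtle point is checking the faithfulness hypothesis needed to apply the Reduction Lemma: we require the $N$-factor of $N\times L$ to act faithfully on $(N\times L)/K_0$, which is not automatic from faithfulness of the full $(N\times L)$-action, and depends crucially on the relation $\phi(n,1)=n$ which realizes $N$ as a genuine subgroup of $G$. The remaining ingredients are routine: Theorem~\ref{Theorem1-1} supplies the splitting, connectedness upgrades open orbits to transitivity, the Reduction Lemma promotes $N$ to a Lie group, and openness of $H$ in $G$ completes the transfer.
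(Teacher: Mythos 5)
Your proof is correct and follows essentially the same route as the paper: the splitting from Theorem~\ref{Theorem1-1}, transitivity of the open subgroup $H=\phi(N\times L)$ forced by connectedness, Lemma~\ref{ReductionLemma} to make $H$ a Lie group, and the Montgomery--Zippin corollary for the case $G/G^\circ$ compact. Your extra care in pulling $H\cap K$ back to $K_0=\phi^{-1}(H\cap K)$ so that the Reduction Lemma literally applies to the product $N\times L$, and in checking that faithfulness transfers to the $N$-factor, only makes explicit what the paper leaves implicit.
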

Again, we restate this result in terms of transformation groups.
\begin{Cor}
Let $G$ be a locally compact group, with $G/G^\circ$ compact. 
If $X$ is a locally contractible, locally compact space and if $G\times X\rTo X$
is a transitive continuous faithful action, then $G$ is a Lie group and 
$X$ is a manifold.

\proof
Let $x\in X$ be a point and let $K\subseteq G$ denote the stabilizer of $x$.
Our assumptions imply that $G$ is $\sigma$-compact, hence 
the natural continuous map $G/K\rTo X$ is a homeomorphism, see for 
example \cite{Stroppel} 10.10.
\qed
\end{Cor}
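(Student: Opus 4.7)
The plan is to reduce this transformation group statement to the homogeneous space version already established in Theorem~\ref{GIsLieGroup}. First I would fix a point $x \in X$ and let $K = G_x = \{g \in G \mid gx = x\}$; continuity of the action makes $K$ a closed subgroup of $G$. Transitivity then yields a continuous bijection $\bar\phi : G/K \rTo X$ defined by $gK \mapstoo gx$, which is automatically equivariant. The goal is to verify that $\bar\phi$ is a homeomorphism, for then $X$ inherits the homogeneous space structure and Theorem~\ref{GIsLieGroup} applies directly: the hypothesis that $G$ acts faithfully transfers to the $G/K$ picture (since the kernel of the action on $G/K$ is $\bigcap_g gKg^{-1}$, which coincides with the kernel of the action on $X$), and local contractibility of $X$ transfers to $G/K$ via the homeomorphism. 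The conclusion of Theorem~\ref{GIsLieGroup} is exactly that $G$ is a Lie group, and combined with Corollary~\ref{XIsManifold} this also gives that $X \cong G/K$ is a manifold.

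The technical step is showing $\bar\phi$ is open. For this I would invoke $\sigma$-compactness of $G$: since $G/G^\circ$ is compact and $G^\circ$ is a connected locally compact group, $G^\circ$ is generated by a compact identity neighborhood and is therefore $\sigma$-compact; the extension of a $\sigma$-compact group by a compact group is again $\sigma$-compact, so $G$ is $\sigma$-compact. For a locally compact $\sigma$-compact group acting transitively by a continuous action on a locally compact Hausdorff space, the standard Baire category argument (an Effros-type open mapping theorem for transitive group actions, as recorded e.g.\ in \cite{Stroppel} 10.10) shows that the orbit map $g \mapstoo gx$ is open. Hence the induced map $\bar\phi : G/K \rTo X$ is both continuous and open, and therefore a homeomorphism.

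With the identification $X \cong G/K$ in hand, the rest is automatic: $X = G/K$ is locally contractible by assumption, so Corollary~\ref{XIsManifold} gives that $X$ is a manifold; and since $G/G^\circ$ is compact and $G$ acts faithfully on $G/K$, Theorem~\ref{GIsLieGroup} yields that $G$ is a Lie group. The main obstacle — really the only nontrivial point — is the openness of the orbit map, which is where the $\sigma$-compactness hypothesis (guaranteed by $G/G^\circ$ being compact) enters in an essential way; without it, a continuous transitive action need not induce a homeomorphism from the coset space, and the transfer of local contractibility from $X$ to $G/K$ would fail.
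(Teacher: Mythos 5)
Your proposal is correct and follows the same route as the paper: identify $X$ with $G/K$ via the orbit map, justify that this is a homeomorphism using $\sigma$-compactness of $G$ (which follows from compactness of $G/G^\circ$) and the open mapping property for transitive actions, and then invoke Theorem~\ref{GIsLieGroup} and Corollary~\ref{XIsManifold}. The paper's proof is just a terser version of the same argument, citing \cite{Stroppel}~10.10 for the homeomorphism $G/K\rTo X$.
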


\section{A historical review}

In his influential  1974 paper \cite{janos} Szenthe stated 
the following result on locally compact groups:

\medskip\noindent
{\em Theorem 4 \cite{janos}. Let a $\sigma$-compact group $G$ with compact $G/G^\circ$
be an effective and transitive topological transformation group of
a locally compact and locally contractible space $X$. Then $G$ is
a Lie group and $X$ is homeomorphic to a coset space of $G$.}

\medskip\noindent
Szenthe's statement provided
a result which was needed and applied in various 
areas, notably in geometry, see for example
\cite{age,ber1,ber2,salz,knarr,knarr2,KramerHabil}.
The proof  of Theorem 4 was based, 
among  other things, on the following statement on compact groups.

\medskip\noindent
{\em Lemma 6 \cite{janos}. Let $G$ be a compact group, $H\subseteq G$ a closed subgroup,
$\chi:G\rTo G/H$ the canonical projection, $A\subseteq G$ a closed invariant
subgroup and $A'=\chi(A)$. If $A'$ is contractible over $G/H$, then $A\subseteq H$.}

\medskip\noindent
However, in 2011, Sergey Antonyan \cite{antii}
found the following simple counterexample to Szenthe's Lemma 6:

\medskip\noindent
\textbf{Example.} Let $G=\SS^1\subseteq\CC^*$, let  $H=\{1\}$ and
and put $A=\{\pm1\}$. Then $A'=A$ is contractible in $G$,
but is not contained in $H$.

\medskip\noindent
It was also noted the mid-nineties by Salzmann and his school \cite{salz}
that Szenthe's method of approximating
a locally compact group $G$ by Lie groups  forces $G$ to be
metric, that is, first countable, see \cite{bickel}. 
Thus even if a  substitute 
method for Szenthe's Lemma~6 for compact groups could be obtained 
by some  correct argument, the ensuing version of Theorem~4
could only be valid for
first countable locally compact groups.

\bigskip

{\raggedright
Karl H. Hofmann \\
Technische Universit\"at Darmstadt \\
Schlossgartenstra{\ss}e 7\\
64289 Darmstadt, Germany\\ 
\makeatletter
{\tt hofmann{@}mathematik.tu-darmstadt.de}

\medskip

Linus Kramer \\
Mathematisches Institut\\
Einsteinstr. 62\\
48149 M\"unster,  Germany\\
\makeatletter
{\tt linus.kramer{@}uni-muenster.de}
}
\newpage
\begin{center}
 \LARGE\bf Erratum to:\\
Transitive actions of locally compact groups\\ on locally contractible
spaces
\end{center}
%
\bigskip
\begin{center}
 Karl H.~Hofmann and Linus Kramer
\end{center}


\bigskip
\bigskip
After the article {\it Transitive actions of locally compact groups on locally contractible
spaces} went to online publication, 
we noticed that the proof of Lemma 3.4 is incorrect. The problem is that
a lift of a homotopically constant map in a fibration need not be homotopically constant.
This issue can be resolved as follows. The numbering and the references are as in our original
article. We have added the references \cite{Whitehead} and \cite{hofmorr3}.

We first prove a preparatory Lemma about coverings of piecewise contractible spaces. 
We call a map $E\rTo^p B$ a \emph{covering map} if every point
$b\in B$ has a neighborhood $V$ which is evenly covered, i.e. $p^{-1}(V)\rTo^p V$
is isomorphic to the product map $V\times F\rTo V$, for some discrete space $F$.
We do not impose (local) connectivity assumptions on $B$ or $E$.

\medskip\noindent\textbf{Lemma 3.1$\frac12$}
{\em Suppose that $E\rTo^p B$ is a covering map. If $B$ is piecewise contractible, then
$E$ is piecewise contractible.}

\proof
(See \cite{hofmorr3}, Lemma 10.77.)
Let $e\in E$ be a point, and let $U$ be an open neighborhood of $p(e)$ which is evenly
covered. 
Replacing $U$ by a smaller neighborhood of $p(e)$ if necessary, we may assume that 
$U$ is contractible in $B$ by a homotopy $h:U\times[0,1]\rTo B$. 
Let $s:U\rTo E$ be a cross section of $p$ over $U$ such that $s(U)$
is a neighborhood of $e$. A covering map is automatically a fibration
\cite[Theorem I.7.12]{Whitehead}, hence there exists
a lift $\tilde h:U\times[0,1]\rTo E$ of $h$ with $\tilde h_0=s$. Then $\tilde h_1$ maps
$U$ into the discrete fiber $F=p^{-1}(h_1(p(e)))
$. 
The preimage $V$ of $\tilde h_1(e)$ is therefore open in $U$.
Thus $s(V)$ is an open neighborhood of $e$ which can be contracted in $E$.
\qed

\medskip\noindent\textbf{Lemma 3.4}
{\em Let $G,K,X,M$ be as in Lemma~3.3. If $G$ acts faithfully
on $X$, then $M$ is a compact Lie group.}

\proof (See \cite{hofmorr3}, Proof of Lemma 10.78.)
We put $L=M\cap K$ and we identify $X$ with the quotient space $M/L$.
It follows from Theorem~2.7
that there is a compact connected semisimple Lie group
$S$, a compact connected finite dimensional abelian group $A$, and a surjective homomorphism
$q:A\times S\rTo M$ with a totally disconnected kernel $E$. We may assume that $q$ maps
$A$ isomorphically onto $\mathrm{Cen}(M)^\circ$. Then $E$ is finite and $q$ is a covering 
homomorphism.
We note that $L$ intersects $q(A)=\mathrm{Cen}(M)^\circ$ trivially, because the action is faithful. Hence
$L$ injects into the compact semisimple Lie group $M/q(A)$. In particular, $L$
is a compact Lie group. Since $q$ is a covering homomorphism, the group $H=q^{-1}(L)$
is also a compact Lie group. Then the compact connected group 
$(\{1\}\times S)H^\circ=T\times S\subseteq A\times S$ is also a
Lie group by Lemma~2.3, and therefore $T$ is a finite dimensional torus.
It follows from [19, Theorem 8.78(ii)] that $A$ splits as 
$A\cong T\times B$, for some compact abelian group $B$.
Now we have $(A\times S)/H^\circ\cong B\times((T\times S)/H^\circ)$.
By Lemma 3.1$\frac12$, the space $(A\times S)/H^\circ$ is piecewise contractible,
and by Lemma 3.1, $B$ is piecewise contractible. In particular, the path component of
the identity in $B$ is open in $B$. Since $B$ is connected, it is therefore path connected.
Since $B$ has finite dimension, it is metrizable [21, Theorem 8.49] 
and therefore a finite dimensional torus [21, Theorem 8.46(iii)]. 
Thus $A$ itself is a finite dimensional torus, and $A\times S$ is a compact Lie group. 
Then $M=q(A\times S)$ is also a Lie group.
\qed

\bigskip

\raggedright
Karl H. Hofmann \\
Technische Universit\"at Darmstadt \\
Schlossgartenstra{\ss}e 7\\
64289 Darmstadt, Germany\\ 
\makeatletter
{\tt hofmann{@}mathematik.tu-darmstadt.de}

\medskip

Linus Kramer \\
Mathematisches Institut\\
Einsteinstr. 62\\
48149 M\"unster,  Germany\\
\makeatletter
{\tt linus.kramer{@}uni-muenster.de}

\end{document}